\documentclass[12pt]{article}
\usepackage{layout}
\usepackage[cm]{fullpage}
\usepackage{amsmath}
\usepackage{amsthm}
\usepackage{amsfonts}
\usepackage{amssymb}
\usepackage{bold-extra}
\usepackage{accents}
\usepackage{fontenc}
\usepackage{amsthm}
\usepackage[top = 1cm, bottom=1.5cm, left =1.5cm, right =1.5cm]{geometry}
\usepackage{mathrsfs}
\usepackage{color}
\usepackage{textcomp}
\usepackage{theoremref}
\usepackage{graphicx}
\usepackage[normalem]{ulem}
\usepackage[dvipsnames]{xcolor}
\usepackage[hang,flushmargin]{footmisc}

\setlength\parindent{0pt}

\newcommand{\D}{\mathcal{D}}

\newcommand{\sfrac}[2] {\mbox{$\frac{#1}{#2}$}}
\newcommand{\prob}[1]{\textrm{\textbf{#1}}}
\newcommand{\Q}{\prob{Q}}
\newcommand{\E}{\prob{E}}
\newcommand{\norm}[2]{|\hspace{-1.75pt}|\hspace{1pt}{#1}\hspace{1pt}|\hspace{-1.75pt}|\hspace{0.5pt}_{#2}  } 

\newcommand{\p}{\overline{p}}
\newcommand{\en}[1]{\enskip #1 \enskip}
\newcommand{\enleq}{\en{\leq}}
\newcommand{\engeq}{\en{\geq}}
\newcommand{\eneq}{\en{=}}
\newcommand{\fal}[1]{\enskip &#1 \enskip}
\newcommand{\al}[1]{&#1 \enskip}

\newtheoremstyle{mytheoremstyle1} 
    {15pt}                    
    {15pt}                    
    {\itshape}                   
    {}                           
    {\bfseries}                   
    {}                          
    {1em}                       
    {}  
\theoremstyle{mytheoremstyle1}
\newtheorem{Theorem}{Theorem}[section]
\newtheorem{thm}[Theorem]{Theorem}
\newtheorem{lem}[Theorem]{Lemma}

\newtheorem{cor}[Theorem]{Corollary}
\newtheorem{prop}[Theorem]{Proposition}

\newtheoremstyle{mytheoremstyle2} 
    {15pt}                    
    {15pt}                    
    {}                           
    {}                           
    {\itshape }                   
    {.}                          
    {1em}                       
    {}  
\theoremstyle{mytheoremstyle2}
\newtheorem*{pf}{Proof}

\newtheoremstyle{mytheoremstyle3} 
    {15pt}                    
    {15pt}                    
    {}                           
    {}                           
    {\bfseries}                   
    {.}                          
    {1em}                       
    {}  
\theoremstyle{mytheoremstyle3}
\newtheorem{definition}[Theorem]{Definition}

\numberwithin{equation}{section}


\begin{document}

\title{The largest fragment of a homogeneous fragmentation process}

\author{Andreas Kyprianou, Francis Lane and Peter M\"orters\footnote{
Department of Mathematical Sciences, University of Bath, Bath BA2 7AY, UK}}

\date{}

\maketitle

\begin{abstract}
\noindent We show that in homogeneous fragmentation processes the largest fragment at time $t$ has 
size $$e^{-t \Phi'(\p)}t^{-\frac32 (\log \Phi)'(\p)+o(1)},$$%
where $\Phi$ is the L\'evy exponent 
of the fragmentation process, and $\p$ is the unique solution of the equation \smash{$(\log \Phi)'(\bar{p})=\frac1{1+\bar{p}}$.} 
We argue that this result is in line with  predictions arising from  the classification of homogeneous fragmentation processes as logarithmically correlated random fields.
\end{abstract}



\section{Introduction}

There has been considerable interest in the past couple of years in a universality class of stochastic models called \emph{logarithmically correlated fields}. This class includes 
branching  Brownian motion~\cite{B78, R13, ABK13}, branching random walks~\cite{AR09, shi, A13}, 
the Gaussian free field on a planar lattice domain~\cite{D06, BZ12, BDZ16},  
the logarithmically correlated random energy model~\cite{FDR09}, 
Gaussian $1/f$-noise~\cite{FDR12}, 
nested conformal loops~\cite{A15}, and
Gaussian multiplicative chaos~\cite{RV14, M15} to name just a few. Plenty of interesting features arise 
from the conjectured membership of combinatorial and probabilistic objects such as eigenvectors of random matrix ensembles
in this class, conjectures of Fyodorov, Hiary and Keating~\cite{FHK12, FK14} on the maximum of the Riemann zeta function on an interval of the  critical line and of the characteristic polynomial of random unitary matrices being well-known examples, see~\cite{Arg16} for a survey. 
\medskip

Let us briefly describe some of the heuristic features of this class,  as sketched, for example, in~\cite{FG14}.
%
Characteristic  of these models is that, loosely speaking, at a large fixed level~$n$ they can be described as a centred field $(V(x) \colon x\in 2^{-n} {\mathbb Z}^d \cap  (0,1)^d)$ with correlations obeying a scaling of the type
\begin{equation}\label{var0}
\mathbb{E} \big[ V(x) V(y) \big] \sim - d \, \Psi''(0) \, \log |x-y|, \qquad \mbox{ if } \mbox{$2^{-n}$} \ll |x-y| \ll 1,
\end{equation}
where  $\Psi$ is a characteristic exponent given as
$$\mathbb{E}\big[ e^{pV(x)}\big] \sim 2^{dn(\Psi(p)-1)}.$$
The conjectured behaviour that the models in this universality class have in common relates to their extremal geometry. It has been argued (at varying levels of detail and rigour) that the highest peak at level~$n$
in a logarithmically correlated field satisfies 
\begin{equation}\label{P1}
\max_{x\in 2^{-n} {\mathbb Z}^d \cap  [0,1]^d}  V(x) \eneq \Psi'(\bar{q}) (d \log 2) n 
 - \frac32 (\log \Psi)'(\bar{q})  \log n + O(1)
\end{equation}
in probability, where $\bar{q}$ solves the equation \smash{$\Psi'(\bar{q})=\frac{\Psi(\bar{q})}{\bar{q}}$.}
In some cases finer results have been obtained, including the precise distribution of the asymptotic 
random constant of order one in the expansion of
$\max V(x)$ and fine results on the peaks seen from the largest peak, see for example~\cite{Br83, A13, ABK13}.
\medskip

An alternative approach to logarithmically correlated fields comes from the work of Fyodorov, Le Doussal and Rosso~\cite{FDR09}. They look at random fields satisfying a multifractal formalism and conjecture that, under natural conditions, the disorder-induced multifractality implies a
logarithmic scaling of the correlations. The highest peak then satisfies
\begin{equation}\label{P2}
\max_{x\in 2^{-n} {\mathbb Z}^d \cap  [0,1]^d}  V(x) \eneq \alpha_+ (d \log 2) \,  n  + \frac32  \big(f'(\alpha_+)\big)^{-1}  \log n +O(1),
\end{equation}
where $f(\alpha)= \dim\{ x \colon \lim \frac1n V(x) = \alpha (d \log 2) \big\}>0$
is the multifractal spectrum on the domain $(\alpha_-, \alpha_+)$ with boundary values given by $f(\alpha_-),f(\alpha_+)=0$. The multifractal formalism relates 
the spectrum to the characteristic exponent through the Legendre transform $\Psi(q)=\max\{ f(\alpha)+ q \alpha\}$.
\medskip

The purpose of the present paper is to align the class of homogeneous fragmentation processes with the universality class of logarithmically correlated fields by describing the processes' extremal behaviour and arguing that the rigorous result we obtain is consistent with the predictions obtained from the heuristics above. This makes homogeneous fragementation processes one of very few examples of a \emph{non-Gaussian}  field where the universality hypothesis can be verified. It also gives non-rigorous evidence that further properties of the class of of logarithmically correlated fields, such as convergence of the constant order term in \eqref{P2} to a random variable of a particular shape and existence of a freezing transition, also hold in this case, but we will not give technical proofs of this. 
\medskip%

Fragmentation processes represent the (typically) continuous splitting of an object into smaller parts.  We describe a fragmentation process by means of a random family $\{ {\mathcal I}^x(t) \colon x\in (0,1), t\geq 0\}$ of intervals such that ${\mathcal I}^x(t)\subseteq (0,1)$ is the interval containing $x$ at time~$t$. We assume that the following consistency relations are satisfied: 
\begin{enumerate}
\item $x\in {\mathcal I}^x(t)$ \hspace{2pt};
\item ${\mathcal I}^x(t)\subseteq  {\mathcal I}^x(s)$ if $s<t$ \hspace{2pt}; \enskip and
\item if $y\in{\mathcal I}^x(t)$, then ${\mathcal I}^x(t)={\mathcal I}^y(t)$.
\end{enumerate}
\pagebreak[3]

The random evolution of the fragmentation is given by a \emph{dislocation measure}~$\nu$ defined on the  partitions of the unit interval. Every interval $ {\mathcal I}^x(t)$ decomposes independently  at rate $\nu(du)$ into parts whose relative sizes are given by the partition~$u$. If the measure $\nu$ is finite then the process
$(\log I^x(t) \colon t>0)$, is a random walk for every $x\in(0,1)$  and the  entire system is a branching random walk. Our interest is therefore mostly on the case of infinite dislocation measure when both particle movement and branching become instantaneous and classical results on  branching random walk cannot be applied.  A rigorous definition of the process in the infinite dislocation measure case will be given in Section~2 of this paper.%
\medskip%

We let $\upsilon$ be uniformly distributed in the unit interval and define the L\'evy exponent, when it is finite, as
\[
\Phi(p) := - \log \E \big[ |{\mathcal I}^\upsilon(1)|^p \big]<\infty,
\]
or, equivalently, in terms of the dislocation measure as
\[
\Phi(p) = \int \bigg( 1 - \sum_{i = 1}^{\infty} |u_i|^{p+1} \bigg) \nu(du) ,
\]
where $(u_i: i \in \mathbb{N})$ is an enumeration of the partition sets  of $u$,
see~\cite{bas} for details. Our main result, Theorem~\ref{mainthm}, describes the size of the largest 
fragment at time $t\uparrow\infty$ as
\[
\max_{x\in[0,1]} |{\mathcal I}^x(t)|=e^{-t \Phi'(\bar{p})}t^{-\frac32 (\log \Phi)'(\bar{p})+o(1)},
\]
where \smash{$\bar{p}$} is the unique solution of the equation $(\log \Phi)'(\bar{p})=\frac1{1+\bar{p}}$.
\medskip


\section{Preliminaries and Main Result}

Before stating the main result of this paper, we briefly discuss the definition of conservative homogeneous interval fragmentation processes and some of their basic properties. An informal description of such a process is as follows.  The process starts from some initial configuration of fragments (i.e. subsets of $(0,1)$), which break up independently of one another as time passes. In general, these fragmentation events occur instantaneously in time. Looking at a single fragment at a given time, its subsequent evolution (after scaling to unit length) looks precisely the same as the fragmentation of any other (similarly scaled) particle. This means, in particular, that our fragmentations are time-homogeneous - the rate of `breaking up' is independent of particle size. Finally, we allow no loss of mass; the sum of the lengths of the fragments at any given time  equals the sum of the lengths of the fragments in the initial configuration.\medskip

Let us now briefly state the formal definition of a conservative homogeneous interval fragmentation process, referring to~\cite{bas} for proofs and further details. Let $\mathcal{U}$ denote the space of open subsets of $(0,1)$, which serves as our state-space. 
Each set $u \in \mathcal{U}$ has a unique decomposition into disjoint, non-empty, open intervals. The intervals comprising this decomposition are referred to as the \textit{fragments} or \textit{particles} of the set, and represent the `pieces' of the object that `falls apart at random'.
For $u,v \in \mathcal{U}$, we define the distance between $u$ and $v$ to be the Hausdorff distance between $(0,1)\setminus u$ and $(0,1) \setminus v$ (see \cite{bertselfsim}). We also endow $\mathcal{U}$ with the $\sigma$-algebra generated by the open sets corresponding to this distance, which we denote by $\mathcal{B}(\mathcal{U})$.  \medskip

Our basic data are a family  $(q_{t}\colon t > 0)$ of probability measures defined on $(\mathcal{U},\mathcal{B}(\mathcal{U}))$. We fix an interval $I:= (a,b) \subseteq (0,1)$ and write $\mathcal{I}$ for the set of open subsets of $I$ (with the distance inherited from $\mathcal{U}$ and the corresponding $\sigma$-algebra). We introduce the affine map $g_I : (0,1) \rightarrow I$, and retain the notation $g_I$ for its natural extension to a map from $\mathcal{U}$ to $\mathcal{I}$. We write $q^I_{t}$ for the image measure of $q_{t}$ under the map $g_I$, so that   $q^I_{t}$ is a probability measure on $\mathcal{I}$. Given an open set $u \in \mathcal{U}$ and a measurable enumeration $(u_i: i \in \mathbb{N})$ of the intervals in its decomposition, we write $q^u_{t} $ for the distribution of $\cup X_i$ where the $X_i$ are independent random variables with laws $q^{u_i}_{t}$ respectively. 

\begin{definition} \thlabel{fragproc} A Markov process $ U:=(U(t): t \geq 0)$ taking values in $\mathcal{U}$ is called a \textit{conservative homogeneous interval fragmentation} if it has the following properties:
\begin{enumerate}
\item $U$ is continuous in probability; 
\item $U$ is nested in the sense that $s>t \Rightarrow U(s) \subseteq U(t)$;
\item \textit{Fragmentation property}: there exists some family $(q_{t}: t > 0)$ of probability measures on $\mathcal{U}$ such that 
$$ \forall t \geq 0 \quad \forall s>t \quad \forall A \in \mathcal{B}(\mathcal{U}) \quad \text{\textnormal{\prob{P}}}\big(U(s) \in A \hspace{3pt} \big| \hspace{3pt}U(t)\big) = q^{U(t)}_{s-t}(A) ;$$ 
\item $|U(t)| = 1$ for all $t \geq 0$.  
\end{enumerate}
\end{definition}

The filtration generated by $U$ is denoted by $ \mathcal{F}:=(\mathcal{F}_t : t \geq 0)$, 
and the law of the  fragmentation started from $u \in \mathcal{U}$ by $\prob{P}_u$, with corresponding expectation operator $\prob{E}_u$. We define $\prob{P} := \prob{P}_{(0,1)}$ with expectation operator $\prob{E}$.\vspace{15pt}

Denoting by $u^*$ the largest interval component of $u \in \mathcal{U}$ we call a measure $\nu$ on $\mathcal{U}$ a \textit{dislocation measure}  if it satisfies $\nu((0,1)) = 0$ and
\vspace{-5pt}
\begin{equation} \label{integrability}
\int_{\mathcal{U}} \left( 1 - |u^*| \right) \nu(du) < \infty, 
\end{equation}
c.f. Definition 2.6 of \cite{bas}.  Given a homogeneous interval fragmentation we obtain a dislocation 
measure~$\nu$ by letting, for $u \in\mathcal{B}(\mathcal U)$ and $I=(0,1)$,
\[
	\nu(u)
\eneq 
	\lim_{t\downarrow 0} \frac1t \big( q_t^I(u)- q_0^I(u) \big).
\]

The measure $\nu$ is called the \textit{dislocation measure corresponding to} $U$, and it characterises the law of~$U$. 

\vspace{17pt}

Next we introduce the collection of \textit{tagged fragments}. Given a fragmentation process $U$ and $x \in (0,1)$, the \textit{x-tagged process} is simply the process of intervals in  $U$ containing $x$. We write $\mathcal{I}^x(t)$ for this fragment at time $t \geq 0$, and $ I^x(t)$ for its length. We also introduce the family of processes  $(\xi^x : x \in (0,1) ) $, where $\xi^x(t):= -\log I^x(t) $.  Letting $\upsilon$ denote a uniform random variable on $(0,1)$ which is independent of all the random variables introduced above, the processes $(\mathcal{I}_t),(I_t)$ and $(\xi_t)$ are defined by replacing $x$ with $\upsilon$ in the preceding definitions. These are the corresponding \textit{randomly tagged} processes.  Importantly, $\xi$ is a subordinator. We denote its Laplace exponent by $\Phi(p) := - \log \prob{E} ( e^{-p \xi(1)})$, which exists and is infinitely differentiable on the interval $(\underline{p},\infty)$ for some $\underline{p} \in [-1,0]$. \bigskip

Using the concavity of $\Phi$, it is easy to show that the equation 
\begin{equation} \label{overbar}   \frac{\Phi(p)}{1+p} = \Phi'(p) 
\end{equation} 
has a unique solution  $\overline{p} \in (\underline{p}, \infty)$, and that this solution is positive. The value $\overline{p}$ has great importance in the present context. For instance, with $c_{\overline{p}} := \Phi'(\overline{p})$, we have
\[
\lim_{t \rightarrow \infty} \frac{\inf_{x \in (0,1)} \xi^x_t}{t} = c_{\overline{p}}  \quad \textrm{a.s.},
\]
giving the first term in the asymptotic expansion of the size of the largest particle (see, for example, \cite{BER01}). \\

We are now ready to state the main result of this paper, which identifies the second term of this asymptotic expansion in terms of  $\overline{p}$ :

\begin{thm} \thlabel{mainthm}
Starting from any initial configuration in $\mathcal{U}$, 
$$\frac{\inf_{x \in (0,1)} \xi^x(t) -  c_{\overline{p}}t}{\log t} \enskip \longrightarrow \enskip \frac{3}{2}(\overline{p} + 1)^{-1} \enskip =: l \quad \textrm{in probability as} \enskip t \uparrow \infty.$$
\end{thm}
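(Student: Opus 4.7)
The plan is to combine a many-to-one formula for the fragmentation with an exponential change of measure at the critical tilt $\overline{p}$. A size-biasing argument on fragments yields, for any non-negative path functional $F$,
$$
\prob{E}\Big[\sum_i F\big((\xi^{x_i}(s))_{s\leq t}\big)\Big] \;=\; \prob{E}\big[e^{\xi(t)} F\big((\xi(s))_{s\leq t}\big)\big],
$$
where the sum runs over fragments present at time $t$ and $\xi$ is the uniformly tagged subordinator with Laplace exponent $\Phi$. Defining the tilted measure $\mathbb{P}^{\overline{p}}$ by $d\mathbb{P}^{\overline{p}}/d\prob{P}|_{\mathcal{F}_t} = e^{-\overline{p}\,\xi(t)+t\Phi(\overline{p})}$ and using the defining identity $(1+\overline{p})\,c_{\overline{p}} = \Phi(\overline{p})$, the right hand side becomes $\mathbb{E}^{\overline{p}}[e^{(1+\overline{p})\,Y_t}\,F]$, where $Y_s := \xi(s) - c_{\overline{p}}\, s$ is centred under $\mathbb{P}^{\overline{p}}$ with $\sqrt{t}$-scale fluctuations of variance $\Phi''(\overline{p})\,t$.

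For the lower bound $\inf_x \xi^x(t) \geq c_{\overline{p}}\,t + (l-\varepsilon)\log t$ in probability, I would use a barrier-enhanced first moment bound. Decompose the event $\{\exists\,x:\;Y^x(t)\leq (l-\varepsilon)\log t\}$ according to whether the offending trajectory stays below the barrier $c_{\overline{p}}\,s+K\log s$ throughout $[s_0,t]$ or not. By many-to-one the expected barrier-compliant count equals
$$
\mathbb{E}^{\overline{p}}\big[e^{(1+\overline{p})\,Y_t}\,\mathbf{1}\{Y_t\leq (l-\varepsilon)\log t,\;Y_s\leq K\log s\;\forall s\in[s_0,t]\}\big],
$$
and the ballot-type asymptotic $\mathbb{P}^{\overline{p}}(Y_t\in dy,\;Y_s\leq K\log s\;\forall s)\asymp (K\log t - y)\,t^{-3/2}\,dy$ yields a count of order $(\log t)\,t^{(1+\overline{p})(l-\varepsilon)-3/2} = o(1)$, where the exponent cancels precisely because $(1+\overline{p})\,l=3/2$. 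The barrier-violation term is controlled by a Markov estimate based on the additive martingale $M_t(\overline{p}+\delta) := \sum_i |I_i(t)|^{1+\overline{p}+\delta}\,e^{t\Phi(\overline{p}+\delta)}$ with small $\delta>0$, together with a union bound on a dyadic time grid, so that the probability of any fragment ever crossing the barrier tends to $0$ as $K\to\infty$, uniformly in $t$.

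For the upper bound $\inf_x \xi^x(t) \leq c_{\overline{p}}\,t+(l+\varepsilon)\log t$ in probability, I would use a Paley--Zygmund argument on the count $N_t$ of fragments whose trajectories remain in a two-sided tube of width $O(\log s)$ around $c_{\overline{p}}\,s$ and which arrive in an $O(1)$ window at height $c_{\overline{p}}\,t+(l+\varepsilon)\log t$. The many-to-one formula and the ballot estimate give $\mathbb{E}[N_t]\to\infty$ polynomially, while the second moment bound $\mathbb{E}[N_t^2]\leq C\,\mathbb{E}[N_t]^2$ follows from the usual decomposition over the time of the last common ancestor of two tagged lineages, exploiting the fragmentation property and an auxiliary spine-type calculation. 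To upgrade Paley--Zygmund's $\prob{P}(N_t>0)\geq c>0$ to convergence to $1$, I would apply the Markov property at a late time $t-T$: the process then consists of many independent sub-fragmentations of fragments of ``typical'' size, each with positive probability of producing an in-tube descendant, so the failure probability is exponentially small in their number. The extension to an arbitrary initial configuration follows from the Markov property after a short burn-in applied to each starting interval.

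The main obstacle, where most of the technical effort will go, is proving the ballot-type estimate for $Y$ under $\mathbb{P}^{\overline{p}}$ at the precise $t^{-3/2}$ polynomial rate, uniformly in the endpoint $y$ in the relevant range. Since $\xi$ is a pure-jump subordinator, $Y$ is spectrally positive and lacks the reflection symmetry that underpins the classical ballot identities for Brownian motion or lattice random walks; I expect the proof to require a local central limit theorem for $Y$ combined with a renewal-theoretic analysis of the barrier hitting times and overshoots. Controlling the second moment in the existence step, in particular the contribution from pairs of lineages whose last common ancestor falls at an atypical time, is a secondary point where the structure of the dislocation measure $\nu$ must be exploited carefully.
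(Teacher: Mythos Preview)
Your overall architecture---many-to-one with the $\overline p$-tilt, a barrier-truncated first moment for the lower bound, Paley--Zygmund plus a bootstrap for the upper bound---is exactly the paper's. But the lower-bound step contains a real error.

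Your barrier is on the wrong side. You ask the offending trajectory to stay \emph{below} $c_{\overline p}s+K\log s$, i.e.\ $\zeta^x_s\le K\log s$, and then assert that ``the probability of any fragment ever crossing the barrier tends to $0$ as $K\to\infty$''. That assertion is false: at every time $s$ there are fragments with $\xi^x(s)$ of order $c's$ for some $c'>c_{\overline p}$ (the smallest fragment has $\xi^x(s)\sim\Phi'(\underline p+)\,s$), so $\mathbf P(\exists\,x,s:\zeta^x_s>K\log s)=1$ for every $K$. Moreover the additive martingale $M_t(\overline p+\delta)$ bounds $\sup_{x} e^{-(1+\overline p+\delta)\zeta^x_t}$ and hence controls how \emph{negative} $\zeta^x$ can get, not how positive; it cannot rule out an upper-barrier crossing. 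The paper (following A\"\i d\'ekon--Shi) uses the opposite truncation $\underline\zeta^x_t\ge -k$: here the violation event $\{\inf_{s,x}\zeta^x_s<-k\}$ \emph{does} have probability tending to $0$ as $k\to\infty$, because the critical additive martingale $M_t=\sum_{[x]_t}e^{-(1+\overline p)\zeta^x_t}$ converges a.s., forcing $\inf_{s,x}\zeta^x_s>-\infty$ a.s. With this lower barrier the ballot input is $\mathbf Q(\zeta_t\le\alpha\log t,\ \underline\zeta_t\ge -k)\lesssim t^{-3/2}(\log t)^2$, and the first moment becomes $t^{(1+\overline p)\alpha-3/2}(\log t)^2=o(1)$ for $\alpha<l$.

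On the upper bound your plan is close to the paper's but differs in two places worth flagging. First, the paper's event uses a \emph{one-sided} barrier ($\zeta^x_s\ge -1$ on $[0,t]$, $\zeta^x_s\ge l\log t$ on $(t,2t]$, and $\zeta^x_{2t}\in[l\log t,\,l\log t+2C]$), giving $\mathbf E Z_t\ge c>0$ and $\mathbf E Z_t^2\le C(\log t)^3$; a genuine two-sided tube of width $O(\log s)$ around $c_{\overline p}s$ would confine the centred L\'evy process to a region of width $o(\sqrt s)$ and drive the first moment to zero, so you will need to relax that. Second, the paper upgrades $\mathbf P(Z_t>0)\ge c(\log t)^{-3}$ to an almost-sure result not by restarting at time $t-T$, but by waiting until there are $n^\varepsilon$ near-extremal fragments (via a Bertoin--Rouault growth rate for $\#\{x:\xi^x_t\le ct\}$) and applying Borel--Cantelli along integers; your Markov-property bootstrap is a reasonable alternative, but it requires uniform control on the sizes of the fragments you restart from, which is an extra ingredient.
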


The proof is based on martingale methods and is close in spirit to that of~\cite{shi}. Roughly speaking, we will define random variables that count the number of particles that are too large or small. Using two tools - a Many-to-One Lemma  
and a change of measure (to be introduced shortly) - we will estimate the moments of these random variables using the fluctuation theory of L\'evy processes. \bigskip

To be precise, let us introduce the processes $\zeta^x_t := \xi^x_t - c_{\overline{p}}t$ for each $x \in (0,1)$, $t \geq 0$, and the corresponding randomly tagged process $\zeta_t := \xi_t - c_{\overline{p}}t$ for $t \geq 0$. For each  $p > \underline{p} $ we also define the process $(\mathcal{E}_p (t) : t \geq 0)$
by  
\[
\mathcal{E}_p (t) := \exp \big( \Phi(p)t - p\xi_t \big).
\] 
This process is a unit mean $(\mathcal{F},\prob{P})$-martingale, allowing us to define the family of probability measures $\big( \prob{Q}^{p} : p > \underline{p})$  by  
\[
\frac{d {\prob{Q}}^{p}}{d \prob{P}} \bigg\vert_{\mathcal{F}_t} 
\eneq 
\mathcal{E}_p (t)  \qquad \mbox{ for } t \geq 0.
\]

In fact we will only use $\Q := \Q^{\overline{p}}$.  This is because, as a consequence of the equation defining $\p$, \eqref{overbar}, the spectrally positive L\'evy process $(\zeta, \Q)$ has zero mean. It is also well-known that $\zeta$ has finite moments of all orders under $\Q$. These special properties allow us to use  results on L\'evy processes with zero mean and finite variance, which are collected in the appendix.  \bigskip

For a set $A \subset (0,1)$, we use the notation $\sum_{[x]_t : A}$ to represent sums taken over the (countable) collection of \textit{distinct} fragments at time $t$ that are subsets of $A$.
We also write $\sum_{[x]_t }$ for $\sum_{[x]_t : (0,1)}$, the sum taken over \textit{all} distinct fragments at time $t$.
For a Borel set $B \subset \mathbb{R}$, $|B|$ stands for the Lebesgue measure of $B$.  Using this notation, we make the simple observation that for any $u \in \mathcal{U}$, $t \geq 0$, and measurable non-negative function $F$ on paths of tagged fragments, we can write
\begin{align*}
\text{\textnormal{\prob{E}}}_u \sum_{[x]_t} F( \xi^x_s : s \leq t)
\fal{=}
\sum_{i=1}^{\infty} \prob{E}_u  \sum_{[x]_t : u_i}   F( \xi^x_s : s \leq t) \\
\al{=} 
\sum_{i=1}^{\infty} \prob{E}  \sum_{[x]_t : (0,1)}   F( \xi^x_s  - \log |u_i|: s \leq t)  \\
\al{=}
\sum_{i=1}^{\infty} \text{\textnormal{\prob{E}}} \big(  I^{-1}_t F( \xi_s - \log |u_i| : s \leq t) \big) ,
\end{align*}

where the sums in $i$ should be regarded as finite in case $u$ consists of finitely many blocks. To illustrate the notation, $\sum_{[x]_t : u_i}$ sums over distinct particles at time $t$ which result from the fragmentation of the interval $u_i$. In the second equality we have used the fragmentation property. To get from second to third, we introduce the factor $I^x_t \cdot (I^x_t)^{-1}$ inside the second sum, which can then be interpreted as a size-biased pick.  Proceeding to make the change of measure $\prob{E} \rightarrow \Q$, we obtain the following Many-to-One lemma:

\begin{lem} \emph{(MT1)} \thlabel{mt1}
For any measurable, non-negative function $F$ on paths of tagged fragments and any $u = (u_1, u_2, ...) \in \mathcal{U}$ we have
\[ \text{\textnormal{\prob{E}}}_{u} \sum_{[x]_t} F( \zeta^x_s : s \leq t) = \sum_{i=1}^{\infty} \text{\textnormal{\prob{Q}}} \big( e^{\zeta_t(\overline{p}+1)}  F( \zeta_s - \log |u_i| : s \leq t) \big) ,\]
In particular, 
\[
\text{\textnormal{\prob{E}}} \sum_{[x]_t} F( \zeta^x_s : s \leq t) =  \text{\textnormal{\prob{Q}}} \big( e^{\zeta_t(\overline{p}+1)}  F( \zeta_s  : s \leq t) \big) .
\]
\end{lem}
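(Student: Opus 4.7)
The derivation displayed immediately before the lemma already yields, for any non-negative measurable $F$,
\[
\text{\textnormal{\prob{E}}}_u \sum_{[x]_t} F(\xi^x_s : s \leq t) \eneq \sum_{i=1}^{\infty} \text{\textnormal{\prob{E}}}\!\left( I_t^{-1} F(\xi_s - \log|u_i| : s \leq t) \right),
\]
so the plan is just to (i) apply this with $F$ replaced by a suitably shifted test functional and (ii) perform the Girsanov-type change of measure to $\Q = \Q^{\overline{p}}$. I expect no genuine obstacle; the only substantive input is the single arithmetic identity that singles out $\overline{p}$ in the exponential weight.

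For step (i), I would apply the identity with $F$ replaced by the functional $(x_s : s \leq t) \mapsto F(x_s - c_{\overline{p}} s : s \leq t)$. Since $\xi_s - c_{\overline{p}} s = \zeta_s$, the left-hand side becomes $\text{\textnormal{\prob{E}}}_u \sum_{[x]_t} F(\zeta^x_s : s \leq t)$ while the right-hand side becomes
\[
\sum_{i=1}^{\infty} \text{\textnormal{\prob{E}}}\!\left( I_t^{-1} F(\zeta_s - \log|u_i| : s \leq t) \right).
\]
The spatial shifts by $\log |u_i|$ and the time-dependent shift by $c_{\overline{p}} s$ live on different coordinates, so nothing in the three-line derivation is disturbed.

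For step (ii), I would write $I_t^{-1} = e^{\xi_t} = e^{c_{\overline{p}} t} e^{\zeta_t}$ and use
\[
\frac{d\prob{P}}{d\Q}\bigg|_{\mathcal{F}_t} \eneq \exp\!\big(-\Phi(\overline{p}) t + \overline{p} \xi_t\big) \eneq \exp\!\big((\overline{p} c_{\overline{p}} - \Phi(\overline{p})) t + \overline{p} \zeta_t\big).
\]
The defining equation \eqref{overbar} for $\overline{p}$ rearranges to $\Phi(\overline{p}) = (1+\overline{p}) c_{\overline{p}}$, whence $\overline{p} c_{\overline{p}} - \Phi(\overline{p}) = -c_{\overline{p}}$. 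The deterministic drifts $c_{\overline{p}} t$ therefore cancel and the total weight collapses to $e^{(1+\overline{p}) \zeta_t}$, yielding the first identity in the lemma. The \emph{in particular} statement follows by taking $u = (0,1)$, whose single component $u_1 = (0,1)$ satisfies $\log|u_1| = 0$. Conceptually, \eqref{overbar} is exactly the condition that makes $\Q^{\overline{p}}$ absorb the linear drift produced by the size-biasing factor, which is why this is the correct tilt for studying the infimum of $\zeta^x$.
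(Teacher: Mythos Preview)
Your proposal is correct and follows essentially the same route as the paper: the derivation preceding the lemma reduces the sum to expectations of $I_t^{-1}F(\xi_s-\log|u_i|:s\le t)$ under $\prob{P}$, and the remaining step is exactly the change of measure $\prob{E}\to\Q$ that you carry out. Your explicit verification that the defining equation~\eqref{overbar} collapses the exponential weight to $e^{(\overline{p}+1)\zeta_t}$ is a detail the paper leaves to the reader, but the argument is identical.
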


To prove \thref{mainthm}, it suffices to prove the following statement two statements for arbitrary $u \in \mathcal{U}$:

\begin{equation}
\prob{P}_u \left( \inf_{x \in (0,1)} \zeta^x(t) \leq \alpha \log t \hspace{2pt}  \right) \rightarrow 0 \quad \textrm{as} \quad t \uparrow \infty \enskip  \textrm{ for all } \enskip \alpha < l \hspace{3pt}; \quad \label{liminf}
\end{equation} 
\begin{equation}
\limsup_{t \rightarrow \infty} \frac{ \inf_{x \in (0,1)} \zeta^x(t)}{\log t} \leq l \hspace{2pt} \quad \prob{P}_u-\textrm{almost surely.}  \label{limsup}
\end{equation}

The structure of the remainder of the paper is as follows. In Section~$3$ we prove \eqref{liminf}, and in Section~$4$ we prove~\eqref{limsup}, the more challenging result. The arguments are analogous to those in \cite{shi}, but there are significant differences on the technical level, occuring particularly in the proof of \eqref{limsup}. The analogous part of the proof in \cite{shi} makes certain moment assumptions that are not satisfied in our framework. We do not need these moment assumptions, as we are able to exploit the special features of our fragmentation processes - namely, that particles decrease in size, and no mass is lost. 
In Section~5 we align our result with heuristics on logarithmically correlated fields. Our proof relies on fine results on  L\'evy processes, which are provided in the appendix, Section~$6$.

\vspace{20pt}


\section{Proof of \eqref{liminf}}

Fix an arbitrary $\alpha \in (0,l)$, $k \in \mathbb{N}$ and  $u = (u_1,u_1,...) \in \mathcal{U}$. Define, for $t \geq 0$, the random variable 
\begin{equation} \label{sumrv} Z^k_t :=  \sum_{[x]_t} \textbf{1} \Big( \zeta^x_t \leq \alpha \log t, \enskip \underline{\zeta}^x_t \geq -k \Big),
\end{equation} 
where $\underline{\zeta}^x_t := \inf_{0 \leq s \leq t} \zeta^x_{\hspace{1pt} s}$. This random variable counts the number of `bad' particles (with a truncation we will remove later). \medskip

We estimate the mean of $Z^k_t$ under $\prob{E}_u$ as follows, recalling that $\zeta$ is the randomly tagged process corresponding to the family of processes $\left( \zeta^x : x \in (0,1) \right)$:
\begin{align} \label{firstest}
\prob{E}_u Z^k_t
\enskip &= \enskip 
\sum_{i=1}^{\infty} \Q \Big( e^{\zeta_t(\overline{p}+1)}  \textbf{1} \big( \zeta_t - \log |u_i| \leq \alpha \log t, \enskip \underline{\zeta}_t - \log |u_i| \geq -k \big) \Big) \nonumber \\ 
&\leq \enskip
t^{\alpha(\overline{p}+1)} \sum_i |u_i|^{\overline{p}+1} \Q \Big(   \zeta_t - \log |u_i| \leq \alpha \log t, \enskip \underline{\zeta}_t - \log |u_i| \geq -k  \Big).
\end{align}

In the first line we use MT1 (Lemma~\ref{mt1}) , and in the second we bound the exponential factor using the indicator. Recalling that $(\zeta, \Q)$ is a spectrally positive L\'evy process with zero mean and finite variance, we can estimate a typical probability on the right-hand side of the previous inequality using \thref{shicor1}:
\begin{align} \label{secondest}
\Q \Big(   \zeta_t - \log |u_i| \leq \alpha \log t, \enskip \underline{\zeta}_t - \log |u_i| \geq -k  \Big) \enskip &\leq \enskip 
\gamma \, t^{-3/2} (k-\log |u_i| +1)(k+\alpha \log t)^2 \nonumber \\
&\leq \enskip \gamma_k \, t^{-3/2}  (\log t)^2 (1- \log|u_i|),
\end{align}

for some constants $\gamma, \gamma_k >0$ (where the latter depends on $k$). Putting this back into \eqref{firstest}, we find that 
\begin{equation}
\prob{E}_u Z^k_t
\enskip \leq \enskip
\gamma_k  t^{\alpha(\overline{p}+1)} t^{-3/2} (\log t)^2 \sum_i |u_i|^{\overline{p}+1} (1-\log |u_i|).
\end{equation}

Since $\overline{p} > 0$, the function $x \mapsto x^{\overline{p}} (1-\log x) $ has an upper bound $K >0$ on $(0,1)$, so the sum on the right-hand side is bounded by $K \sum |u_i| = K$. We deduce that
\begin{equation}
\prob{E}_u Z^k_t
\enskip \leq \enskip K \gamma_k \: t^{\alpha(\overline{p}+1)} t^{-3/2} (\log t)^2.
\end{equation}

Since $\alpha(\overline{p}+1) < l(\overline{p}+1) = 3/2$, this quantity goes to zero as $t \rightarrow \infty$.

\vspace{10pt}

To complete this part of the proof we must remove the truncation $\underline{\zeta}^x_t \geq -k$ in \eqref{sumrv}. To this end, we introduce the 
\emph{intrinsic additive martingale} corresponding to $\overline{p}$, 
\[
M_t := e^{\Phi(\overline{p}) t} \sum_{[x]_t} I^x(t)^{1+\overline{p}} 
= \sum_{[x]_t} \exp \big( -(1+\overline{p}) \hspace{2pt} \zeta^x_t
\big) .
\]
By the martingale convergence theorem, $M_t$ converges to a finite limit $\prob{P}_{u}$-almost surely as $t \rightarrow \infty$. Noting that $\overline{p} > 0$, we get 
$\inf_{t \geq 0} \inf_{x \in (0,1)} \zeta^x_t >  -\infty$  $\prob{P}_{u}$-a.s.
Letting $B_k :=  \big\lbrace  \inf_{t \geq 0} \inf_{x \in (0,1)}   \zeta^x_t \geq  -k \big\rbrace$ 
for each $k \in \mathbb{N}$, it follows that
\begin{equation} \label{truncation2}
\lim_{k \rightarrow \infty} \prob{P}_u (B_k) = 1 .
\end{equation}

Next fix an arbitrary $\epsilon > 0$, and (using \eqref{truncation2}) select $k=k(\epsilon) \in \mathbb{N}$ so large that $\prob{P}_{u}(B_k) \geq 1 - \epsilon$. Observing that  $Z^k_t  \geq \textbf{1}_{B_k}
  \sum_{[x]_t} \textbf{1} (\zeta^x_t \leq \alpha \log t)$ for all $t \geq 0$, we may then write, 
\begin{align} \label{truncation3}
\prob{P}_{u}(Z^k_t = 0) \enskip &\leq \enskip \prob{P}_{u} \bigg[ B_k \cap \Big\{  \sum_{[x]_t} \textbf{1} \big( \zeta^x_t \leq \alpha \log t \big) = 0 \Big\}\bigg] \enskip + \enskip \prob{P}_{u} (B_k^c) \nonumber \\
&\leq \enskip \prob{P}_{u}\bigg[ \sum_{[x]_t} \textbf{1} \big(\zeta^x_t \leq \alpha \log t \big)  = 0 \bigg] \enskip + \enskip \epsilon,
\end{align}

for all $t \geq 0$. We have already shown that $\prob{E}_{u} (Z^k_t) \rightarrow 0$ as $t \rightarrow \infty$, and so, since $Z^k_t$ takes values in $\{0,1,2,...\}$, we deduce that $\prob{P}_{u} (Z^k_t = 0) \rightarrow 1$ as $t \uparrow \infty$.  Combining this observation with \eqref{truncation3} we conclude that  

$$ 1 \enskip = \enskip \liminf_{t \rightarrow \infty} \prob{P}_{u} (Z^k_t = 0)  \enskip \leq \enskip \liminf_{t \rightarrow \infty}\prob{P}_{u}\bigg[ \sum_{[x]_t} \textbf{1} \big(\zeta^x_t \leq \alpha \log t \big) \hspace{3pt} = 0 \bigg]  \enskip + \enskip \epsilon \enskip . $$

Since $\epsilon > 0$ was arbitrary, we get
$1=\lim_{t \rightarrow \infty}\prob{P}_{u} \Big[ \sum_{[x]_t} \textbf{1} \big(\zeta^x_t \leq \alpha \log t \big) \enskip  = 0 \Big]$.
Finally, observe that  
$$ \bigg\lbrace \sum_{[x]_t} \textbf{1} (\zeta^x_t \leq \alpha \log t)\enskip  = 0 \bigg\rbrace \enskip \subseteq \enskip 
\left\lbrace \inf_{x \in (0,1)} \zeta^x_t > \alpha \log t \right\rbrace,$$ 
so that 
$\prob{P}_{u} \big( \inf_{x \in (0,1)} \zeta^x_t > \alpha \log t \big) 
\rightarrow 1$ as $t \uparrow \infty,$
which implies that \eqref{liminf} holds. \qed

\vspace{20pt}


\section{Proof of \eqref{limsup}}

In this part of the proof, we can work under $\prob{P}$ without loss of generality. To see why, note that we are now trying to show the existence of `big' particles (in the sense made precise by \eqref{limsup}). This means that, starting the fragmentation from general $u \in \mathcal{U}$, we can immediately look only at the largest particle at time $t$ descending from $u^*$, whose size we call $B^{u^*}_t$. Let $B_t$ denote the size of the largest fragment at time~$t$ in a fragmentation issued from $(0,1)$. The fragmentation property implies that $(B^{u^*}_t , \prob{P}_u)$ is equal in law to $(|u^*| B_t, \prob{P})$. The numerator in 
\eqref{limsup} corresponding to these two processes will therefore only differ by the additive constant $-\log |u^*| $, which goes to zero in the limit upon division by $\log t$. 

\vspace{10pt}

Let $C>0$ be the larger of the two constants provided by \thref{nonzeroliminf} and \thref{shiprop1}. Introduce the following intervals:
\begin{equation}
	J_s(t)
\en{:=} 
	\begin{cases} 
		[-1, \infty) &\mbox{if} \enskip 0 \leq s \leq t \\ 
		[l \log t , \infty) &\mbox{if} \enskip t < s < 2t \\
		[l \log t, l \log t + 2C] &\mbox{if} \enskip s = 2t. 
	\end{cases} 
\end{equation}

For $x \in (0,1)$ and $u,v \in [0,2t]$, define the events $A^x_{2t,[u,v]} := \{ \zeta^x_s \in J_s(t) \enskip \forall s \in [u,v] \}$, and write $A^x_{2t}:= A^x_{2t,[0,2t]}$. In what follows, $A_{2t}$ (with no superscript) means $A^{\upsilon}_{2t}$, where $\upsilon$ is the uniformly distributed random tag in $(0,1)$ in the definition of $\zeta$. Finally, define the random variable
$Z_t := \sum_{[x]_{2t}} \textbf{1}_{A^x_{2t}}$. \medskip

The first step is to bound $\E Z_t$ from below. Using MT1 (Lemma~\ref{mt1}), 
we obtain
\begin{align*} \label{lower}
	\prob{E} Z_t 
\eneq 
	\Q 
	\big( 
	e^{\zeta_t(\overline{p}+1)} \,
	\textbf{1}_{A_{2t}}
	\big) 
\engeq
	\gamma \,  t^{3/2} \, \Q(A_{2t})  
\engeq 
	\gamma' 
\en{>} 
	0,
\end{align*}
for some $\gamma, \gamma' > 0$ and all large $t$. In the first inequality we have used the indicator to bound the exponential factor from below; the second uses \thref{nonzeroliminf}.\medskip 

Next, we bound the second moment of $Z_t$ from above. To this end we introduce the notation $\mathcal{D}_s$ to denote the random set of all fragmentation times in $[0,s]$, which, in general, is almost surely dense in $[0,s]$. For $r \in \D_{s}$ write $B_{[z]_r}$ for the event that the interval $\mathcal{I}^z_{r-}$  shatters at time $r$. Note that for $r \in \D_s$ precisely one of the indicators 
\smash{$\textbf{1}_{B_{[z]_{r-}}} $} over all dinstinct fragments $[z]_{r-} \subset (0,1)$
takes the value $1$ (simultaneous fragmentations of distinct blocks is a null event). We then make the decomposition
\begin{equation} \label{decomp}
Z_t^2 = Z_t + \Lambda_t,
\end{equation}

where
\begin{align} \label{defLambda} 
	\Lambda_t 
\fal{:=} 
	\sum_{r \in \mathcal{D}_{2t}} \hspace{3pt}   
	\sum_{[z]_{r-} : (0,1)} \textbf{1}_{A^z_{[0,r-]}} \hspace{4pt} 		
	\textbf{1}_{B_{[z]_r}}
	\sum_{\substack{[x]_r , [y]_r : [z]_{r-} \\ [x]_r \neq [y]_r}}  
	\hspace{4pt}
	\sum_{\substack{[u]_{2t} : [x]_{r} \\ [v]_{2t} : [y]_{r}}}
	\textbf{1}_{A^u_{[r,2t]}} \textbf{1}_{A^v_{[r,2t]}}\\ 
\al{=} 
	\sum_{r \in \mathcal{D}_{2t}} \hspace{3pt}   
	\sum_{[z]_{r-} : (0,1)}
	\Lambda^z_{r},
\end{align}

where the second line defines $\Lambda^z_{r}$. As we are temporarily regarding $t$ as fixed, we have written $A^w_{[u,v]}$ for  $A^w_{2t,[u,v]}$. This decomposition is similar to the one used in \cite{shi}, but we have the added complication that the sum in $r$ is over a random (dense) set. To explain this decomposition, first note that $Z_t^2 = \sum_{[u]_{2t}} \textbf{1}_{A^u_{2t}} \cdot \sum_{[v]_{2t}} \textbf{1}_{A^v_{2t}}$. The $Z_t$ in \eqref{decomp} comes from the terms in this product where $\mathcal{I}^u_{2t} = \mathcal{I}^v_{2t}$. When  $\mathcal{I}^u_{2t} \neq \mathcal{I}^v_{2t}$, we find their most recent common ancestor $\mathcal{I}^z_{r-} $ just before it fragments (at time $r$) into the distinct ancestors $ \mathcal{I}^x_{r}$ and $ \mathcal{I}^y_{r}$ of $\mathcal{I}^u_{2t}$ and $\mathcal{I}^v_{2t}$ respectively. \medskip

Our aim is to bound $\prob{E} \Lambda_t $ from above. The first part of the calculation uses the fragmentation property to make the summand indexed by $r$ in \eqref{defLambda} measurable with respect to $\mathcal{F}_r$.  To this end, we first show that, for all $s>0$, the set $\mathcal{D}_s$ almost surely has an enumeration $(r_1,r_2,...)$ with the property that each $r_i$ is an $\mathcal{F}$-stopping time. 
Fix $s>0$, and a strictly increasing (deterministic) sequence $(a_i) \subset [0,1)$ with $a_1 = 0$ and $\lim a_i = 1$. 
If $[z]_{r-}$ (for some $z \in (0,1)$) is the particle that shatters at time $r \in \mathcal{D}_s$, then the fragments at time $r$ resulting from this fragmentation event are given by an affine image of some $u_r \in \mathcal{U}$. We write $u_r^*$ for the largest interval component of $u_r$.
We then introduce the sets $\mathcal{D}_{s,n} := \{r \in \mathcal{D}_s :  |u^*_r| \in [a_n,a_{n+1}) \}$. 
Of course, $\mathcal{D}_s = \bigcup_{n \in \mathbb{N}} \mathcal{D}_{s,n}$, and, as we will now show, $D_{s,n} := \# \mathcal{D}_{s,n} < \infty$ almost surely, for all $n \in \mathbb{N}$. To this end, we rewrite $D_{s,n}$ as follows:
\begin{equation*}
	D_{s,n}
\eneq
	\sum_{0 \leq r \leq s} \textbf{1}_{B_{[z]_r}}  
	\textbf{1}_{\big( \, |u^*_r| \in [a_n,a_{n+1}) \big) \,} .
\end{equation*}
Using the compensation formula (see page 99 of \cite{KYP14}), we deduce that $\E D_{s,n} = s \, \nu \big( u^* \in [a_n,a_{n+1}) \big)$. It remains to note that, for all $n \in \mathbb{N}$,
\begin{equation*}
	\nu \big( u^* \in [a_n,a_{n+1}) \big) 
\enleq
	(1-a_{n+1})^{-1} \int_{\mathcal{U}} \left( 1 - |u^*| \right) \nu(du)
\en{<}
	\infty.
\end{equation*}
The desired enumeration is then obtained by listing the elements of each (almost surely finite) set $\mathcal{D}_{s,n}$ in order of increasing size, and concatenating the resulting sequences.\medskip

Using the enumeration $(r_1,r_2,...)$ constructed above (with $s=2t$) and the non-negativity of the terms in \eqref{defLambda}, we can now take the first step towards estimating $\E \Lambda_t$, writing
\begin{equation} \label{enum}
	\E \Lambda_t 
\eneq 
	\sum_{i = 1}^{\infty}  
	\E \sum_{[z]_{r_i- }:(0,1)} \Lambda^z_{r_i}
\eneq
	\sum_{i = 1}^{\infty}  
	\E \hspace{2pt} \E_{\mathcal{F}_{r_i}}  		
	\sum_{[z]_{r_i- }:(0,1)} \Lambda^z_{r_i} .
\end{equation}

In the second equality we have conditioned the term in the sum labelled by $r_i$ on the sigma-algebra $\mathcal{F}_{r_i}$. Next we calculate these conditional expectations. Fixing $r = r_i$ for some $i \in \mathbb{N}$, we have
\begin{equation}
	\prob{E}_{\mathcal{F}_{r}} 
	\sum_{[z]_{r-} : (0,1)}
	\Lambda^z_{r}
\eneq
	\sum_{[z]_{r-} : (0,1)} \textbf{1}_{A^z_{[0,r-]}} \hspace{4pt} \textbf{1}_{B_{[z]_r}}
	\sum_{\substack{[x]_r , [y]_r : [z]_{r-} \\ [x]_r \neq [y]_r}}
	\hspace{4pt}
	\E_{\mathcal{F}_r}
	\sum_{\substack{[u]_{2t} : [x]_{r} \\ [v]_{2t} : [y]_{r}}}
	\textbf{1}_{A^u_{[r,2t]}} \textbf{1}_{A^v_{[r,2t]}}.
\end{equation}

where we have used the fact that $r$ is $\mathcal{F}_r$-measurable. We then write 
\begin{equation} \label{indep}
\E_{\mathcal{F}_r}
\sum_{\substack{[u]_{2t} : [x]_{r} \\ [v]_{2t} : [y]_{r}}}
\textbf{1}_{A^u_{[r,2t]}} \textbf{1}_{A^v_{[r,2t]}}
\eneq
\left( \E_{\mathcal{F}_r}
\sum_{[u]_{2t} : [x]_{r}}
\textbf{1}_{A^u_{[r,2t]}}  \right)
\left( \E_{\mathcal{F}_r}
\sum_{[v]_{2t} : [x]_{r}}
\textbf{1}_{A^v_{[r,2t]}} \right)
\end{equation}
for $x,y \in (0,1)$ such that $\mathcal{I}^x_{2t} \neq \mathcal{I}^y_{2t}$, using the independent evolution of distinct particles. Now we calculate a typical factor on the right-hand side of \eqref{indep} (explanations follow the calculation): 
\begin{align}
	\E_{\mathcal{F}_r}
	\sum_{[u]_{2t} : [x]_{r}}
	\textbf{1}_{A^u_{[r,2t]}}  
\fal{=}
	\E_{\mathcal{F}_r}
	\sum_{[u]_{2t} : [x]_{r}}
	\textbf{1}_{ \big( 
	\zeta^u_s \in J_s(t) \enskip \forall s \in [r,2t] 
	\big)} 
	\nonumber\\
\al{=} 
	\E_{\mathcal{F}_r}
	\sum_{[u]_{2t} : [x]_{r}}
	\frac{I^u_{2t}}{I^x_r} \frac{I^x_r}{I^u_{2t}} 
	\textbf{1}_{ \big( 
	\zeta^u_s \in J_s(t) \enskip \forall s \in [r,2t] \big)} 
	\nonumber\\
\al{=}
	\left( 
	\E \,
	I_{2t-r}^{-1} 
	\textbf{1}_{ \big( 
	\alpha + \zeta_s \in J_{s+r}(t) \enskip \forall s \in [0,2t-r] 
	\big)} 
	\right) 
	\bigg\vert_{\alpha = \zeta^x_r} 
	\nonumber \\
\al{=}
	\left( \Q \,
	e^{\zeta_{2t-r}(\p + 1)} \textbf{1}_{ \big( 
	\alpha + \zeta_s \in J_{s+r}(t) \enskip \forall s \in [0,2t-r] 
	\big)} 
	\right) 
	\bigg\vert_{\alpha = \zeta^x_r} \nonumber  
\en{=} 
	F(\zeta^x_r),
\end{align} 

where, for $\alpha \in \mathbb{R}$,
\begin{equation*}
	F(\alpha) 
\en{:=} 
	\Q \,
	e^{\zeta_{2t-r}(\p + 1)} \textbf{1}_{ \big( \alpha + \zeta_s \in J_{s+r}(t) \enskip \forall s \in 		    [0,2t-r] \big)} .
\end{equation*}

In the first line we just write down the definition of the events $A^u_{[r,2t]}$; the second artificially introduces a size-based pick; the third makes use of the size-biased pick together with the fragmentation property; and the final line makes the change of measure $\E \rightarrow \Q$. So far, we've shown that 
\begin{equation} 
	\prob{E}_{\mathcal{F}_{r}} \sum_{[z]_{r-} : (0,1)} 
	\Lambda^z_{r}
\eneq
	\sum_{[z]_{r-} : (0,1)} \textbf{1}_{A^z_{[0,r-]}} \hspace{4pt} \textbf{1}_{B_{[z]_r}}
	\sum_{\substack{[x]_r , [y]_r : [z]_{r-} \\ [x]_r \neq [y]_r}}
	F(\zeta^x_r) F(\zeta^y_r).
\end{equation}

Putting this expression back into \eqref{enum} and exchanging the summation and expectation, we arrive at
\begin{equation} \label{reduction}
	\E \Lambda_t
\eneq 
	\E \sum_{r \in \mathcal{D}_{2t}} \hspace{3pt}   
	\sum_{[z]_{r-} : (0,1)} \textbf{1}_{A^z_{[0,r-]}} \hspace{4pt} \textbf{1}_{B_{[z]_r}}
	\sum_{\substack{[x]_r , [y]_r : [z]_{r-} \\ [x]_r \neq [y]_r}}
	F(\zeta^x_r) F(\zeta^y_r).
\end{equation}

We have now succeeded in making the $r$-indexed summand $\mathcal{F}_r$-measurable, which will allow us to use the compensation formula (see page 99 of \cite{KYP14}). 
To this end, define the function $G: \mathbb{R} \times \mathcal{U} \rightarrow [0,\infty]$ by
\[
G(\alpha, u) \en{:=} \sum_{a_u \neq b_u} F(\alpha - \log |a_u|) F(\alpha -\log |b_u|)
\]
where the sum is over the distinct interval components $a_u, b_u \subset u$.\footnote{
The function $G$ can be constructed in a measurable way by ordering the interval components of $u \in \mathcal{U}$ in order of decreasing length, $(u_1,u_2,...)$, and then writing the sum as $\sum_{i=1}^{\infty} \sum_{j\neq i}  F(\alpha - \log |u_i|) F(\alpha -\log |u_j|) $.} 
Using the compensation formula, we can move from \eqref{reduction} to
\begin{align} \label{precomp}
	\E \Lambda_t 
\fal{=} 
	\int_0^{2t} dr \, \cdot \, 
	\E \left(
	\sum_{[z]_{r-} : (0,1)} \textbf{1}_{A^z_{[0,r-]}}  
	\int_{\mathcal{U}} G(\zeta^z_{r-},u) \nu (du) 
	\right) \nonumber \\
\al{=}
	\int_0^{2t} dr \, \cdot \, 
	\Q \left(
	e^{\zeta_{r-}(\p+1)}
	\textbf{1}_{A_{[0,r-]}}  
	\int_{\mathcal{U}} G(\zeta_{r-},u) \nu (du) 
	\right) \nonumber \\
\al{=} 
	\int_0^{2t} dr \, \cdot \, \lambda(r),	
\end{align}

where the final equality defines $\lambda(r)$ 
as the integrand of the previous line. Here, $\nu$ is the dislocation measure introduced in Section $2$, which satisfies the integrability condition \eqref{integrability}. \bigskip

\textbf{Notation:} In the remainder of this section, positive constants (independent of $t$) will be denoted by $\gamma > 0$, the value of which will change from one inequality to another.  \bigskip

We state the next part of the proof as a lemma:

\begin{lem} \hspace{10pt} $\displaystyle{\text{\textnormal{\prob{E}}} \Lambda_t = \int_0^{2t} \lambda(r) dr = O\big((\log t)^3 \, \big)}$ \quad as \enskip $\displaystyle{t \uparrow \infty}$.
\end{lem}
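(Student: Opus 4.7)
The plan is to estimate $\lambda(r)$ pointwise in $r$ and then integrate, the main work being to bound the inner function $F(\alpha)$ and the dislocation integral $\int G(\zeta_{r-}, u)\, \nu(du)$ in a form that is uniformly controllable in $r$ and $\nu$-integrable in $u$.

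First, I would bound $F(\alpha)$. On the event defining $F$, the endpoint constraint $\alpha + \zeta_{2t-r} \in [l \log t, l \log t + 2C]$ forces $\zeta_{2t-r}(\p +1) \leq (l \log t + 2C - \alpha)(\p + 1)$; since $l(\p+1) = 3/2$, this bounds the exponential factor by $e^{2C(\p +1)}\, t^{3/2} e^{-\alpha(\p + 1)}$. The residual $\Q$-probability is that a centred finite-variance L\'evy process stays above a piecewise-constant barrier (at height $-1-\alpha$ on $s \leq (t-r)_+$ and $l \log t - \alpha$ afterwards) and lands in an interval of width $2C$ at time $2t-r$. A Markov split at $s = (t-r)_+$ combined with the appendix's ballot-type estimates (in the spirit of \thref{shicor1}) should then give a bound of the form $F(\alpha) \leq C\, t^{3/2}\, e^{-\alpha(\p + 1)}\, h(\alpha, r, t)$ for an explicit polynomial $h$ of controlled degree in $\alpha$ and $\log t$.

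Second, I would substitute into $G(\alpha, u) = \sum_{i \neq j} F(\alpha - \log|u_i|) F(\alpha - \log|u_j|)$. The exponentials combine into $e^{-2\alpha(\p +1)}\, |u_i|^{\p +1} |u_j|^{\p +1}$, while the polynomial factors evaluated at $\alpha - \log|u_i|$ introduce $(1 + |\log|u_i||)^k$-type terms. Since $\p > 0$, the map $x \mapsto x^{\p}(1-\log x)^k$ is bounded on $(0,1)$ for any fixed $k$ (the same device used for \eqref{liminf}), so those logarithms are absorbed into factors of $|u_i|$, reducing the task to integrating $\sum_{i \neq j} |u_i||u_j|$ against $\nu$. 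The decisive elementary estimate
\[
\sum_{i \neq j} |u_i||u_j| \eneq 1 - \sum_i |u_i|^2 \enleq 1 - |u^*|^2 \enleq 2(1-|u^*|),
\]
combined with the integrability condition \eqref{integrability}, then gives $\int_{\mathcal{U}} G(\alpha, u)\, \nu(du) \leq C\, e^{-2\alpha(\p +1)} \cdot \tilde h(\alpha, r, t)$ for a second polynomial $\tilde h$.

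With these bounds in hand, $\lambda(r) \leq C\, \Q\big(e^{-\zeta_{r-}(\p + 1)} \textbf{1}_{A_{[0,r-]}}\, \tilde h(\zeta_{r-}, r, t)\big)$, after which a final application of the appendix barrier estimates, with the $r$-integral split at $r=t$ to handle the jump in $J_s(t)$, should yield $\int_0^{2t} \lambda(r)\, dr = O((\log t)^3)$. The principal obstacle I anticipate is the regime $r > t$: there $\lambda(r)$ simultaneously encodes three separate ``stay above $l \log t$''-type constraints (one on $[0, r-]$ from $\textbf{1}_{A_{[0, r-]}}$, and one on $[r, 2t]$ inside each of the two $F$-factors), and the ballot-type estimates must be combined without double-counting the cost of the first passage above $l \log t$. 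A secondary technicality is verifying that the polynomial factors $\tilde h$, after integration against $\Q$ on the barrier event, contribute only powers of $\log t$ rather than powers of $t$, since it is this bookkeeping that produces exactly the exponent $3$ in $(\log t)^3$ rather than a larger power.
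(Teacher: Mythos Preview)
Your plan matches the paper's proof almost exactly: the same extraction of the factor $t^{3/2}e^{-\alpha(\p+1)}$ from $F$, the same reduction of the $\nu$-integral via $\sum_{i\neq j}|u_i||u_j|\le 2(1-|u^*|)$ together with the boundedness of $x\mapsto x^{\p}(1-\log x)^k$, and the same split of the $r$-integral at $r=t$. One point of divergence worth flagging: you expect the appendix ballot-type estimates alone to close the argument on $[0,t]$, whereas the paper, after reaching
\[
\int_0^t \lambda(r)\,dr \enleq \gamma\,(\log t)^2\int_0^t \Q\big((2+\zeta_{r-})^2 e^{-\zeta_{r-}(\p+1)}\mathbf{1}_{A_{[0,r-]}}\big)\,dr,
\]
bounds the time integral by $\Q_1\!\int_0^{\tau_0^-}(1+\zeta_{r-})^2 e^{-\zeta_{r-}(\p+1)}\,dr$ and evaluates it via the occupation formula for spectrally one-sided L\'evy processes (Theorem~20, p.~196 of \cite{BER96}); this gives a clean finite double integral rather than a further layer of barrier estimates. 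Your route can be made to work too (slice on $\zeta_{r-}\in[k,k+1]$ and use \thref{shicor1}), but it is this step, not the $r>t$ regime you flag as the principal obstacle, where the paper brings in an ingredient beyond the appendix.
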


\begin{proof}

First we estimate $F(\alpha - \log |a_u|)$ for interval components $a_u$ of $u \in \mathcal{U}$ and $\alpha \in \mathbb{R}$: using the indicator to bound the exponent we have
\begin{align} \label{estF}
	F(\alpha - \log |a_u|)
\fal{=}
	\Q \,
	e^{\zeta_{2t-r}(\p + 1)} \textbf{1}_{ \big( \alpha - \log |a_u| + \zeta_s \in J_{s+r}(t) \enskip \forall        	s \in [0,2t-r] \big)} \nonumber \\
\al{\leq}
	\gamma \, t^{3/2} \, \, |a_u|^{\p + 1}e^{-\alpha (\p +1)}
	f(\alpha - \log |a_u|),
\end{align}
for some $\gamma > 0$, with 
\[
f(\theta) \en{:=} \Q\bigg(\theta + \zeta_s \in J_{s+r}(t) \enskip \forall s \in [0,2t-r] \bigg), \qquad \textrm{for} \enskip \theta \in \mathbb{R}.
\]

We estimate $f$ in two different ways, depending on the value of $r$. For $r \in [t,2t]$, \thref{shiprop1} provides the estimate
\begin{equation*}
f(\theta) \enleq \Q \bigg( \zeta_{2t-r} \in [l \log t - \theta, l \log t - \theta + 2C] \bigg) 
\enleq \gamma \: n_{2t-r},
\end{equation*}
with $n_{\theta} := \theta^{-1/2} \wedge 1$ for $\theta \geq 0$. Referring back to \eqref{precomp}, this leads to the bound
\begin{equation} \label{firstLambda}
	\int_t^{2t} \lambda(r) \, dr
\enleq
	\gamma \,  I_1 \, t^3  	
	\int_0^{2t} dr \, \cdot 
	n_{2t-r}^2
	\Q \left(
	e^{-\zeta_{r}(\p+1)}
	\textbf{1}_{A_{[0,r-]}}  
	\right)  \, ,
\end{equation}

where
\[
I_1 \en{:=} \int_{\mathcal{U}} \nu(du) \, \cdot \sum_{a_u \neq b_u} |a_u|^{\p+1} |b_u|^{\p+1} \, .
\]

Let us check that $I_1$ is finite. Indeed, 
\begin{align*}
	\sum_{a_u \neq b_u} |a_u|^{\p+1} |b_u|^{\p+1} 
\fal{\leq}
	\sum_{a_u \neq b_u} |a_u| |b_u| 
\eneq
	\sum_{a_u} |a_u| (1-|a_u|)\\
\al{\leq} 
	(1-|u^*|) + \sum_{a_u \neq u^*} |a_u| \\
\al{=} 
	2 (1-|u^*|).
\end{align*}
In the first inequality we use the facts that $|a_u|,|b_u| < 1$ and $\p > 0$; in the first equality we fix an interval component $a_u$ of $u \in \mathcal{U}$ and sum over the interval components $b_u \neq a_u$ of $u$; and in the second inequality we use the fact that $|a_u| \in (0,1)$.  
The finiteness of $I_1$ then follows from \eqref{integrability}. It remains to estimate the expectation in \eqref{firstLambda}:
\vspace{-3pt}
\begin{align*}
	\prob{Q} \big(  e^{-\zeta_{r-} (\overline{p}+1)} \textbf{1}_{A_{[0,r-]}} \big) 
\fal{\leq} 
	\gamma \, t^{-3/2} \,  
	\prob{Q}  
	\big(  
	\textbf{1}_{A_{[0,r-]}} \textbf{1}_{( \zeta_{r-} \leq 2 \,l \log t )}  
	\big) 
	+ 
	\gamma \, \prob{Q} 
	\big(  
	e^{-\zeta_{r-} (\overline{p}+1)}   
	\textbf{1}_{A_{[0,r-]}} 
	\textbf{1}_{( \zeta_{r-} > 2 \,l \log t )}
	\big) \\[3pt]
\al{\leq} 
	\gamma \, t^{-3/2} \Q\big( \underline{\zeta}_{r-} \geq - 1 , \: \zeta_{r-} \leq  2 l \log t \big) + 		\gamma \, t^{-3}\\[3pt]
\al{\leq}  
	\gamma \, t^{-3/2} (r^{-3/2} \wedge 1) (\log t)^2 + \gamma \, t^{-3}.
\end{align*}
In the first line we split the event $\{ \zeta_{r-} \geq l \log t\} \subset A_{[0,r-]}$ into the events $\{ \zeta_{r-} > 2 l \log t\}$ and $\{ l \log t \leq \zeta_{r-} \leq 2 l \log t\}$. In the second line, we discard some information from the indicator on the interval $[t,r]$ and estimate the exponential factor in the second term using the indicator  $\textbf{1}_{( \zeta_{r-} > 2 \,l \log t )}$. In the final line, we use \thref{shicor1} to estimate the remaining expectation. 
Returning to \eqref{firstLambda}, we conclude that
\begin{equation*}
	\int_t^{2t} \lambda(r) \, dr
\enleq 
	\int_t^{2t} \, dr \, \cdot 	
	\left[	
	\gamma \, (\log t)^2 \, t^{3/2} \, 
	(r^{-3/2} \wedge 1) \, n_{2t-r}^2 
	\en{+} 
	\gamma \, n_{2t-r}^2
	\right].
\end{equation*}

Elementary analysis allows us to conclude that $\displaystyle{\int_t^{2t} \lambda(r) dr = O\big((\log t)^3\big),}$ as required. \bigskip

Now we look at $\lambda(r)$ for $r \in [0,t]$. This time we make the estimate 
\vspace{-5pt}
\begin{align*}
	f(\theta) 
\fal{\leq} 
	\Q \left( 
		\underline{\zeta}_{2t-r} \geq -1 - \theta, 
		\enskip 
		\zeta_{2t-r} \in [l \log t - \theta, l \log t - \theta + 2C] \, 	
	\right) \\
\al{\leq} 
	\gamma \, (1+\theta) \, (\log t) \, (2t-r)^{-3/2} \\
\al{\leq} 
	\gamma \, (1+\theta) \, (\log t) \,  t^{-3/2}.
\end{align*}

In the first inequality we throw away some information from the indicator on the interval $[t,2t-r)$; in the second we use \thref{shicor1}; and the final inequality uses the fact that $r \in [0,t]$. Making the substitution $\theta = \alpha - \log |a_u|$, we arrive at
\begin{align*}
	f(\alpha - \log |a_u|)  
\fal{\leq}
	\gamma \, (1+\alpha - \log |a_u| ) \, (\log t) \, t^{-3/2}\\
\al{\leq}
	2 \gamma \,(2+\alpha) \, ( 1 - \log |a_u| ) \, (\log t) \, t^{-3/2}
\end{align*}
for $\alpha \geq -1$ (recall we intend to make the substitution $\alpha = \zeta_{r-} \geq -1$). This leads to the bound
\[
	\lambda(r)
\enleq
	\gamma \,  I_2 \, (\log t)^2 \,	
	\Q \left(
	e^{-\zeta_{r}(\p+1)}
	(2+\zeta_{r-})^2
	\textbf{1}_{A_{[0,r-]}}  
	\right)  ,
\]

where
\[
	I_2 
\en{:=} 
	\int_{\mathcal{U}} \nu(du) \, \cdot \sum_{a_u,b_u} |a_u|^{\p+1} |b_u|^{\p+1}
	( 1 - \log |a_u| )
	( 1 - \log |b_u| ) .
\]

This time we note that the function $x \mapsto x^{\p} (1 - \log x)$ is bounded on $[0,1]$, since $\p > 0$. This allows us to write $I_2 \leq K \int_{\mathcal{U}} \nu(du) \cdot  \sum |a_u| |b_u|$ (for some $K>0$), which is finite by the same arguments we used for $I_1$. To complete the proof we define $\tau_0^- := \inf \{s \geq 0 : \zeta_s < 0 \} $, and let $\Q_1$ denote the law of $1+\zeta_t$ under $\Q$. We note then that
\begin{align*}
\int_0^t dr \cdot \Q \, \left( (2+\zeta_{r-})^2 e^{-\zeta_{r-}(\p + 1)} \textbf{1}_{A_{[0,r-]}} \right)
\fal{\leq} 
e^{\p + 1} \, \Q_1 \int_0^{\tau_0^-}  (1+\zeta_{r-})^2 e^{-\zeta_{r-}(\p + 1)} \: dr \, .
\end{align*}
Defining the function $h: [0,\infty) \rightarrow [0,\infty)$ by $h(\theta) := (1+\theta)^2 \, e^{-(\p+1)\theta}$, and bearing in mind that $\zeta$ is spectrally positive, we apply Theorem 20 (page 196) of \cite{BER96} to make the following calculation:
\begin{align*}
	\Q_1 \int_0^{\tau_0^-}  (1+\zeta_{r-})^2 e^{-\zeta_{r-}(\p + 1)} \: dr 
\fal{=}
	\Q_1 \int_0^{\tau_0^-}  h(\zeta_{r-})\: dr  \\
\al{=} 
	\gamma \int_0^{\infty} dy \, \cdot \,
	\int_0^1 dz \, \cdot \, 
	h(1+y-z)
\end{align*}
for some $\gamma > 0$. It remains to note that the right-hand side of the previous display is bounded by $K \int_0^\infty e^{-w} dw < \infty$, for some finite constant $K>0$ (since $1+\p > 1$). 
\end{proof}

\vspace{10pt}

Let us collect together the facts we have established in this section so far: for some $\gamma_1, \gamma_2 >0$, we have
\begin{align}
&\prob{E} (Z_t) \geq \gamma_1 \enskip ; &&\hspace{-120pt}\textrm{and}  \label{fact1}\\
&Z_t^2 \eneq Z_t + \Lambda_t \enskip , &&\hspace{-120pt}\textrm{with}  \label{fact2}\\ 
&\E \Lambda_t \enskip \leq \enskip \gamma_2 (\log t)^3 \label{fact3} ,
\end{align}

for all large $t$. Following page $7$ of \cite{shi}, we make the following simple calculation, valid for all large $t$: 
\begin{equation} \label{fact4}
\prob{E} (Z_t^2) \enskip \leq \enskip \gamma_2 (\log t)^3 + \prob{E}(Z_t)  \enskip \leq \enskip  \left[ \sfrac{\gamma_2}{\gamma_1}(\log t)^3 + 1 \right]  \prob{E}(Z_t) \enskip \leq \enskip  \left[ \sfrac{\gamma_2}{\gamma_1}(\log t)^3 + 1 \right]  \sfrac{1}{\gamma_1} \prob{E} (Z_t)^2 ,
\end{equation}

where the first inequality uses \eqref{fact2} and \eqref{fact3}, and the next two inequalities use \eqref{fact1}. First making use of the Paley-Zygmund inequality, 
and then of \eqref{fact4}, we find that 

\begin{equation*}
\prob{P} (Z_t > 0) \enskip \geq \enskip \frac{\prob{E}(Z_t)^2}{\prob{E}(Z_t^2)} 
\enskip \geq \enskip \frac{\gamma}{(\log t)^3} \hspace{5pt}.
\end{equation*}

We then note that 

\begin{equation*}
\left\lbrace \min_{x \in (0,1)} \zeta_{2t} \enskip > \enskip l\log t + 2C\right\rbrace \enskip \subseteq \enskip \left\lbrace Z_t = 0 \right\rbrace 
\end{equation*}

so that, for all sufficiently large $t$, we have
\begin{align}
\prob{P} \left\lbrace \min_{x \in (0,1)} \zeta^x_t \enskip > \enskip l\log t + 2C\right\rbrace  \enskip \fal{\leq} \enskip  \prob{P} \left\lbrace \min_{x \in (0,1)} \zeta^x_t \enskip > \enskip l\log \sfrac{t}{2} + 2C\right\rbrace  \nonumber \\
\al{\leq} \enskip  1 - \frac{\gamma}{(\log t)^3} .
\end{align}

Now we need to know the rate at which the number of exceptionally large particles grows. 
To be precise define, in the notation of \cite{krell}, sets
$G_{c,\alpha,\beta}(t) := \{ \mathcal{I}^x(t) : x \in (0,1) , \alpha e^{-ct} < I^x(t) < \beta e^{-ct} \}$
for $0<\alpha <1< \beta$ and $c \in \mathbb{R}$. A result from~\cite{bertoin} shows that for $c \in (c_{\overline{p}}, \Phi'(\underline{p}+)) $ there exists $\tilde{\rho}(c)>0$, depending only on $c$, and not on $\alpha$ or $\beta$, such that
$$ \lim_{t \rightarrow \infty} \frac{1}{t} \log \#G_{c,\alpha,\beta}(t) =  \tilde{\rho}(c) \quad \textrm{a.s.}   $$

We fix a small $\delta>0$ and $c:= c_{\overline{p}}+\delta$,
and define sets $\mathcal{N}(t):= \{ \mathcal{I}^x(t) \colon \xi^x_t - ct \leq  1 \}  $. We deduce that for $\rho=\tilde\rho(c)>0$ we have
\begin{equation} \label{rate1}
\lim_{t \rightarrow \infty} \frac{1}{t} \log \#\mathcal{N}(t) \geq  \rho \quad \prob{P}-a.s.
\end{equation}

\medskip

Next, fix an arbitrary $\epsilon>0$ and define $T_n := T(n,\epsilon) := \inf \{ t \geq 0 : \#\mathcal{N}(t) \geq n^{\epsilon} \}$.  We choose the $\lfloor n^{\epsilon} \rfloor$ largest elements of $\mathcal{N}(T_n)$ 
and label them $\{ \mathcal{I}^{n,j} : 1 \leq j \leq \lfloor n^{\epsilon} \rfloor \}$ in order of increasing size.  We then write $\xi^{n,j,x}_t$ to denote the $-\log$ of the size of the particle containing $x \in \mathcal{I}^{n,j}$ at each time $t \geq T_n$. Note, for instance, that $ \xi^{n,j,x}_{T_n} = - \log I^{n,j}$  for all $x \in \mathcal{I}^{n,j} $.  For all $n \in \mathbb{N}$ we have 
\[
\prob{P} \left(  \max_{s \in [\frac{n}{2},n] \cap \mathbb{N}}  \enskip \min_{1 \leq j \leq \lfloor n^{\epsilon} \rfloor}  \enskip \min_{x \in \mathcal{I}^{n,j}} \enskip \xi^{n,j,x}_{ T_n + s} -  c_{\overline{p}}(T_n + s) >  \max_{1 \leq j \leq \lfloor n^{\epsilon} \rfloor} \xi^{n,j} -c_{\overline{p}} T_n  + l \log n + 2C   \right)
\]

\vspace{-10pt}

\begin{align} \label{nexpression} 
&\leq 
	\enskip \sum_{s \in [\frac{n}{2},n] \cap \mathbb{N}} \prob{P} \left(  \inf_{x \in (0,1)} \xi^x_s - 	
	c_{\overline{p}}s  \enskip > \enskip l \log t + 2C   \right)^{\lfloor n^{\epsilon} \rfloor }  
\leq \enskip 
	\sum_{s \in [\frac{n}{2},n] \cap \mathbb{N}} \left(   1 - \frac{\gamma}{(\log s)^3}  
	\right)^{\lfloor n^{\epsilon} \rfloor } \nonumber \\[3pt]
&\leq \enskip 
	\frac{n}{2} \left(   1 - \frac{\gamma}{(\log n)^3}  \right)^{ n^{\epsilon} -1}.
\end{align}

The final expression is summable in $n$ (see \thref{sum}). By the Borel-Cantelli lemma, we deduce that, $\prob{P}$-almost surely,
\begin{align}
\max_{s \in [\frac{n}{2},n] \cap \mathbb{N}}  \enskip \min_{1 \leq j \leq \lfloor n^{\epsilon} \rfloor}  \enskip \min_{x \in \mathcal{I}^{n,j}} \enskip \xi^{n,j,x}_{ T_n + s} -  c_{\overline{p}}(T_n + s)  \enskip &\leq \enskip  
\max_{1 \leq j \leq \lfloor n^{\epsilon} \rfloor} \xi^{n,j} -c_{\overline{p}} T_n  + l \log n + 2C   \nonumber \\
&\leq \enskip 1 + (c_{\overline{p}}+\delta) T_n - c_{\overline{p}} T_n  + l \log n + 2C \nonumber  \\[5pt]
&= \enskip \delta T_n + l \log n + 2C + 1  . \label{maxminmax}
\end{align}

The final ingredient we need to finish the proof is to show that 

\begin{equation} \lim_{n \rightarrow \infty} \frac{T(n,\epsilon)}{\log n} \enskip \leq  \enskip  \frac{\epsilon}{\rho} \quad \prob{P}-\textrm{a.s.}  \label{rate}
\end{equation}

To do this, fix $\epsilon' \in (0, \rho)$. Then, by \eqref{rate1},  there is some almost-surely finite random variable $T \geq 0$ such that, almost surely, $t \geq T$ implies $\#\mathcal{N}(t) \geq e^{(\rho - \epsilon')t}$. Consequently, for all $t \geq T$ we know that 
$$ T(n,\epsilon) \enskip \leq \enskip \inf \Big\lbrace t \geq 0 : e^{(\rho - \epsilon')t} \geq n^{\epsilon} \Big\rbrace \enskip = \enskip \frac{\epsilon \log n } {\rho - \epsilon'}  .$$
This yields \eqref{rate}. Combining \eqref{maxminmax} and \eqref{rate} we find that $\prob{P}$-almost surely, for all large~$n$, we have 
\begin{equation*}
\max_{s \in [\frac{n}{2},n] \cap \mathbb{N}}  \enskip \min_{1 \leq j \leq n}  \enskip \min_{x \in \mathcal{I}^{n,j}} \enskip \xi^{n,j,x}_{ T_n + s} -  c_{\overline{p}}(T_n + s) \enskip \leq
 \enskip \left( \sfrac{2\epsilon \delta}{\rho}+ l \right)   \log n + 2C + 1.
\end{equation*}

By \eqref{rate}, we can write almost surely that $T_n = \epsilon \cdot O( \log n)$. We immediately deduce that for all large $n$ we have
\[
	\inf_{x \in (0,1)} \xi^x_{n+ T_n  } - c_{\overline{p}}(n+ \epsilon \, O(\log n))  
\enleq 
	\left( \sfrac{2\epsilon \delta}{\rho}+l \right) \log n + 2C  + 1   
	\hspace{20pt} \prob{P}- \text{a.s.} 
\]
Noting that $T_n \geq 0$ and that $t \mapsto \xi^x_t$ is monotonically increasing, we deduce that, for all large $n$,
\[
	\inf_{x \in (0,1)} \xi^x_n - c_{\overline{p}} n 
\enleq 
	\left( \sfrac{2\epsilon \delta}{\rho}+l \right) \log n  + \epsilon \, O(\log n)
	\hspace{20pt} \prob{P}- \text{a.s.} 
\]
Since $\epsilon>0$ can be made arbitrarily small, we conclude that 

$$ \limsup_{\mathbb{N} \ni n \rightarrow \infty} \frac{ \inf_{x \in (0,1)} \xi^x_n - c_{\overline{p}}n }{\log n} \enskip \leq \enskip   l  \hspace{20pt}  \prob{P}-\text{a.s.} $$

By monotonicity of $t\mapsto \xi^x_t$ we see that the limit can be taken through all real 
values,  completing the proof of~\eqref{limsup}. \quad \qed

 \vspace{20pt}
 

\section{Physical heuristics}

In this section we argue \emph{informally} how our result can be put in line with the predictions described 
for logarithmically  correlated random fields in the introduction. To cast the model  into this framework we let
$$V(x)=\log |{\mathcal I}^x(t)| - \mathbb{E} \log |{\mathcal I}^x(t)|, \qquad \mbox{ for }
x\in 2^{-n} {\mathbb Z} \cap  [0,1] \mbox{ and } t=\frac{n \log 2}{\Phi'(0)}, $$
where the choice of time scale comes from matching the spatial scale $2^{-n}$ to $e^{-t\Phi'(0)}$, which is the typical length of a 
tagged fragment at time~$t$ and hence the scale on which $V$ needs to be sampled.
We have 
\begin{align*}
e^{-t\Phi(p)} & =\mathbb{E}\big[ \big| \mathcal{I}^{\upsilon}(t)\big|^{p} \big]
\approx \sum_{x \in 2^{-n}\mathbb{Z} \cap(0,1)} \mathbb{E}  \big| \mathcal{I}^{x}(t)\big|^{p+1}
\approx \exp\big(   (p+1) \mathbb{E} \log | \mathcal{I}^\upsilon(t)| + (n \log 2) \Psi(p+1)\big).
\end{align*}
Observing that $\mathbb{E} \log |{\mathcal I}^\upsilon(t)|\sim- t\Phi'(0)$ we get
$$\Psi(p+1) =  1 + p - \frac{\Phi(p)}{\Phi'(0)}.$$
We introduce, for $x,y \in (0,1)$, the stopping time $T = T(x,y) := \inf \{t \geq 0: \mathcal{I}^x_t \neq \mathcal{I}^y_t \} $, the time when $x$ and $y$ are first split apart. Fixing an arbitrary
$t>0$ and abbreviating $\tau = t \wedge T$ we can decompose 
$$ |{\mathcal I}^x(t)| = 
|{\mathcal I}^x(\tau -)| \times \Delta^x_{\tau} \times |\tilde{\mathcal I}^{\tilde x}(t-\tau)|,$$
where $\Delta^x_s := |{\mathcal I}^x(s)|/|{\mathcal I}^x(s-)|$, the process 
$(\tilde{\mathcal I}^x(s) \colon s \geq 0)$ is a fragmentation process, which is independent of what happened up to time~$t$, and $\tilde x\in(0,1)$ is the relative position of $x$ in ${\mathcal I}^x(t)$.
Taking $\log$ on both sides of the decomposition  and centering gives
$V(x,t) \sim V(x,\tau -) + \tilde{V}(\tilde{x},t-\tau),$ as $t\uparrow\infty$,
where we define $V(z,t)= \log |{\mathcal I}^z(t)| - \mathbb{E} \log |{\mathcal I}^z(t)|$. 
Taking expectations and using the independence we get
$\mathbb{E} [ V(x,t) V(y,t) \big] \sim \mathbb{E}  \big[ V(x,\tau -) V(y, \tau -)]  .$
Using Wald's identity (see Theorem 3 of \cite{hall70}) we calculate the expectation on the right and obtain
$$
\mathbb{E} \big[ V(x,t) V(y,t) \big] \sim   \mathbb{E}\big[ t \wedge T\big] \,  \Phi'(0)\Psi''(0).
$$
Recalling that $t=\frac{n \log 2}{\Phi'(0)}$ we look at a regime where
$$t\Phi'(0)=n \log 2 \gg -\log |x-y|.$$ Observe that the right-hand side is at least
$-\log |{\mathcal I}^x(T-)| \sim T \Phi'(0)$ and hence
$\mathbb{E}[ t \wedge T]\sim \mathbb{E}[T]$. We obtain
\begin{equation}\label{var1}
\mathbb{E} \big[ V(x) V(y) \big] \sim  \mathbb{E}\big[ T(x,y) \big] \,  \Phi'(0)\Psi''(0) 
\qquad \mbox{ if } \mbox{$2^{-n}$} \ll |x-y| \ll 1.
\end{equation}
This is a result of the type~\eqref{var0}, if the distance 
of points $x,y$ on the interval is measured not with the euclidean metric, but
with respect to the natural random metric coming from our problem, defined by
$d(x,y)=|\mathcal{I}^x(T(x,y)-)|$ and therefore $-\log d(x,y)\sim {\Phi'(0)T(x,y)}.$
The result can also be partially claimed for the euclidean set-up,
as $\log |x-y| \leq \log |{\mathcal I}^x(T(x,y)-)| \sim \log d(x,y) \, \Phi'(0)$ but we will see below
that working in this framework will lead to a loss of accuracy.
\medskip

The physicist's prediction~\eqref{P1} hence gives 
$$\max_x \log |\mathcal{I}^x(t)|  + t \Phi'(0) \approx  \Psi'(\bar{q}) n \log 2 - \frac32 (\log \Psi)'(\bar{q}) \log n.$$
Recalling that $\Psi(p+1)=1+p-\frac{\Phi(p)}{\Phi'(0)}$ we get $\bar{q}=\bar{p}+1$ and hence
$$\max_x \log |\mathcal{I}^x(t)|  \approx -\frac{\Phi'(\bar{p})}{\Phi'(0)} n \log 2 - \frac32 \frac1{\bar{p}+1} \log n \sim -\Phi'(\bar{p}) t - \frac32 \frac1{\bar{p}+1} \log t,$$
which is in line with our rigorous result.
\medskip

To relate our story to the multifractal approach of Fyodorov, Le Doussal and 
Rosso~\cite{FDR09} we first recall the multifractal spectrum for homogeneous fragmentation processes obtained by Berestycki~\cite{b03}
and refined by Krell~\cite{krell}. We define 
$q_\beta$ by $\Phi'(q_\beta)=\beta$. Then,
for every $\beta$ making the right-hand side below positive, almost surely,
$$\dim_{|\cdot|}\big\{ x\in (0,1) \colon \lim_{t\uparrow\infty} -\sfrac1t \log |\mathcal{I}^x(t)| = \beta\big\}
= 1+ q_\beta -\frac{\Phi(q_\beta)}{\beta}.$$
Perhaps surprisingly, this formula does \emph{not} put our result in line with the prediction of Fyodorov, Le Doussal and Rosso. 
The prediction can however be reconciled with our results, if one moves to
the appropriate metric, which in our case is again the random metric $d$. While for fixed intervals the ratio of
lengths with respect to $d$ and the Euclidean metric are typically bounded from zero and infinity, the optimal coverings 
implicit in the Hausdorff dimension above use random intervals for which these diameters
are radically different. Indeed, given $\beta$ the covering intervals $I$ for the corresponding set have metric
diameters given by their length to the power $\Phi'(0)/\beta$ (see for example~\cite{M09}). As a result the
multifractal spectrum in the intrinsic random metric becomes
$$\dim_{d}\big\{ x\in (0,1) \colon \lim_{t\uparrow\infty} -\sfrac1t \log |\mathcal{I}^x(t)| = \beta\big\}
= \frac{\beta}{\Phi'(0)}\Big(1+ q_\beta\Big)  -\frac{\Phi(q_\beta)}{\Phi'(0)}.$$
This can be translated as
$$\dim_{d}\big\{ x\in (0,1) \colon V(x) \approx \alpha (\log 2) n \big\}
=\Psi(p_\alpha)- \alpha p_\alpha=: f(\alpha),$$
where $p_\alpha$ is given by $\Psi'(p_\alpha)=\alpha$. Hence $f'(\alpha)=-p_\alpha$. The right end of the spectrum, $\alpha_+$, is characterised by the equation $\Psi(p_{\alpha_+})=p_{\alpha_+}\Psi'(p_{\alpha_+})$, hence $p_{\alpha_+}=\bar q$ and $\alpha_+=\Psi'(\bar q)$ aligning the prediction of~\eqref{P2} with our result.

\vspace{20pt}


\section{Appendix on L\'evy Processes}

In this section we extend the lemmas found in the appendix of~\cite{shi} from random walks to L\'evy processes with finite variance and zero mean. 
The proofs proceed by contradiction: we assume that the various statements do not hold for appropriate L\'evy processes, and then generate a random walk contradicting the results in \cite{shi} by discretization. 
We begin by stating two elementary lemmas which will be of use in carrying out such arguments. The first is a topological lemma whose proof can be found in \cite{king}. The second is a simple observation, recorded for convenience. 
Throughout this section we write $X$ for the process $(X_t)_{t \geq 0}$.

\begin{lem} \label{topologicallemma} \quad Let $U\subseteq[0,\infty)$ be open and unbounded. Then there exists $h > 0$ such that $nh \in U$ for infinitely many~$n \in \mathbb{N}$. 
\end{lem}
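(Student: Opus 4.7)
The plan is to argue by contradiction using the Baire category theorem. Suppose the conclusion fails, so that for every $h>0$ the set $\{n\in\mathbb{N} : nh\in U\}$ is finite. For each $N\in\mathbb{N}$ set
$$F_N := \{h>0 : nh \notin U \text{ for all } n\geq N\} = \bigcap_{n\geq N}\{h>0 : nh \notin U\}.$$
Since $U$ is open, each of the sets $\{h : nh\notin U\}$ is closed in $(0,\infty)$, so each $F_N$ is closed. The contradiction hypothesis yields $(0,\infty)=\bigcup_{N\geq 1}F_N$, and the Baire category theorem applied to the complete metric space $(0,\infty)$ then forces some $F_N$ to contain a nondegenerate open interval $(a,b)$ with $0<a<b$.

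Next I would translate this interior information back into a statement about $U$ itself. For every $n\geq N$ and every $h\in(a,b)$ we have $nh\notin U$, so the scaled interval $(na,nb)$ is disjoint from $U$. An elementary calculation shows that as soon as $n > a/(b-a)$ the consecutive intervals $(na,nb)$ and $((n+1)a,(n+1)b)$ overlap, and hence for $M$ sufficiently large the union $\bigcup_{n\geq M}(na,nb)$ equals the entire ray $(Ma,\infty)$. It follows that $U\cap (Ma,\infty)=\emptyset$, contradicting the unboundedness of $U$.

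The main conceptual move is the Baire step in the first paragraph, which is what makes the seemingly delicate statement tractable; without it one is tempted to try to construct $h$ by hand, which runs into trouble when the open intervals making up $U$ shrink quickly (e.g.\ $U=\bigcup_k(k,k+1/k^2)$). The only substantive bookkeeping is then the overlap observation in the second step, which converts a disjointness statement on a sequence of dilated intervals into a disjointness statement on an unbounded ray. Everything else is a direct unpacking of the definitions.
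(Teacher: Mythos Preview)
Your argument is correct. The one imprecision worth flagging is the phrase ``the complete metric space $(0,\infty)$'': with the Euclidean metric inherited from $\mathbb{R}$ this space is not complete (Cauchy sequences may converge to $0$). It is, however, a Baire space---either because it is locally compact Hausdorff, or because it is homeomorphic to $\mathbb{R}$ and hence completely metrizable---so the Baire category theorem applies and the rest of your reasoning goes through unchanged. You might also state explicitly that the $M$ you take in the final step satisfies both $M\geq N$ and $M>a/(b-a)$, so that the intervals $(na,nb)$ with $n\geq M$ are indeed disjoint from $U$ \emph{and} overlap consecutively.

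As for comparison with the paper: the paper does not actually prove this lemma. It states the result and refers the reader to \cite{king} (a 1955 \emph{American Mathematical Monthly} problem). Your Baire-category argument is a clean, self-contained proof and is in fact one of the standard ways to establish this fact; it has the advantage of requiring no ad hoc construction of $h$ and of making transparent why the result holds even when the components of $U$ shrink rapidly.
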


\begin{lem} \thlabel{rcont}
\quad Let $X$
be a real-valued stochastic process issued from zero with almost surely right-continuous paths. Then 
\[
\forall \epsilon > 0 \quad \forall \delta > 0 \quad \exists \hspace{1pt} a>0 
\quad\mbox{ such that } \quad
\text{\textnormal{\prob{P}}}\big(  {\norm{X}{[0,a]}} > \delta \big)\hspace{2pt} < \hspace{2pt} \epsilon \hspace{1pt},
\]
where ${\norm{X}{[0,a]}} := \sup_{0 \leq t \leq a} |X_{\hspace{1pt} t} |$ .
\end{lem}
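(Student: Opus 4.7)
The plan is a short direct argument by continuity of measure. Set $A_a := \{\|X\|_{[0,a]} > \delta\}$; almost sure right-continuity of paths gives $\sup_{0 \le t \le a}|X_t| = \sup_{t \in [0,a]\cap \mathbb{Q}}|X_t|$ a.s., which makes the events $A_a$ measurable. The events $A_a$ are monotone: if $a' \le a$, then $A_{a'} \subseteq A_a$.

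The key probabilistic observation is that $\bigcap_{n \in \mathbb{N}} A_{1/n}$ is a null event. For any sample path $\omega$ on which $X$ is right-continuous at $0$ (a set of full measure by assumption) and with $X_0(\omega)=0$, the definition of right-continuity gives some $\eta = \eta(\omega)>0$ with $|X_t(\omega)| < \delta$ for all $t \in [0,\eta)$, so $\|X(\omega)\|_{[0,a]} \le \delta$ for all sufficiently small $a$, placing $\omega$ outside $A_{1/n}$ for all large $n$. Since $\bigcap_{a > 0} A_a = \bigcap_{n} A_{1/n}$ by monotonicity, this intersection is null.

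I would then conclude by continuity of probability from above: $\prob{P}(A_{1/n}) \downarrow \prob{P}(\bigcap_n A_{1/n}) = 0$ as $n \to \infty$, so one can choose $n$ with $\prob{P}(A_{1/n}) < \epsilon$ and set $a := 1/n$. No technical obstacle arises; the only thing to be vigilant about is the measurability of the supremum (handled by the right-continuity reducing it to a countable supremum) and the fact that right-continuity at $0$ plus $X_0 = 0$ forces the sup on $[0,a]$ (a closed interval) to become small, rather than merely the lim sup of $X_t$ as $t \downarrow 0$.
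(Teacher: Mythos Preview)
Your argument is correct and complete; the paper itself does not supply a proof, treating the lemma as ``a simple observation, recorded for convenience,'' and your continuity-from-above argument is exactly the kind of elementary reasoning the authors have in mind.
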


Now we state the first of our results on L\'evy processes.

\begin{prop}  \thlabel{shiprop1} Let $X$ be a  L\'evy process with zero mean and finite variance. Then 
$$ \exists C_{0}>0 \quad \exists c>0 \quad\mbox{ such that }\quad \forall h \geq C_{0} \quad \forall t > 0  \qquad   \sup_{r \in \mathbb{R}} \hspace{2pt} 
\text{\textnormal{\prob{P}}} \big(r \leq X_t \leq r+h \big)\enskip \leq \enskip c \hspace{2pt} \frac{h}{t^{1/2}} \hspace{5pt}. $$
\end{prop}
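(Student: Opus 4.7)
The plan is to follow the contradiction-and-discretisation template the authors use throughout the appendix: assume the bound fails for the L\'evy process $X$, then transfer the failure to a random walk obtained by sampling $X$ on a deterministic time grid, contradicting the analogous random-walk estimate in \cite{shi}. First I would negate the conclusion: for every pair $(C_0, c)$ of positive constants there is a witness $(h, t, r)$ with $h \geq C_0$ and $\prob{P}(r \leq X_t \leq r+h) > c h/\sqrt{t}$; since the probability is at most $1$, any such witness automatically satisfies $t > c^2 C_0^2$. Fixing large $(C_0, c)$ (to be chosen at the end), introduce the ``bad-time'' set
\[
	U
\en{:=}
	\big\{ t > 0 : \exists\, r\in\mathbb{R},\ h \geq C_0,\ \prob{P}(r < X_t < r+h) > c h/\sqrt{t} \big\}.
\]

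Stochastic continuity of $X$, the fact that $X_t$ has at most countably many atoms, and continuity of $t \mapsto 1/\sqrt{t}$ together show that $U$ is open; the lower bound $t > c^2 C_0^2$ for any bad witness shows it is also unbounded, so Lemma~\ref{topologicallemma} supplies some $\Delta > 0$ with $k\Delta \in U$ for infinitely many $k\in\mathbb{N}$. The sampled process $S_k := X_{k\Delta}$ is then a random walk with i.i.d.\ mean-zero increments of finite variance $\Delta\operatorname{Var}(X_1)$, so the random-walk version of the present proposition proved in \cite{shi} yields constants $C_0'(\Delta), c'(\Delta)$ with $\sup_r \prob{P}(r \leq S_k \leq r+h) \leq c'(\Delta)h/\sqrt{k}$ whenever $h \geq C_0'(\Delta)$ and $k \geq 1$. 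For the infinitely many $k$ with $k\Delta \in U$ there are witnesses $(h_k, r_k)$ with $h_k \geq C_0$ and $\prob{P}(r_k < S_k < r_k + h_k) > c h_k/\sqrt{k\Delta}$, so comparing with Shi's estimate forces $c \leq c'(\Delta)\sqrt{\Delta}$ --- the desired contradiction once $(C_0, c)$ have been chosen large enough relative to the scale-$\Delta$ constants.

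The main obstacle I expect is the apparent circularity: $(C_0, c)$ have to be fixed at the start, but the relevant scale $\Delta$ is produced only afterwards by Lemma~\ref{topologicallemma}, and it in turn determines the Shi constants. I would resolve this by tracking how the constants from \cite{shi} depend on the step variance: the standard local-CLT heuristic predicts that $c'(\Delta)\sqrt{\Delta}$ and $C_0'(\Delta)/\sqrt{\Delta}$ are both bounded uniformly in $\Delta$ in terms of $\operatorname{Var}(X_1)$ alone, which permits an \emph{a priori} choice of $(C_0, c)$. Should the dependence prove less benign, a Baire-category refinement of Lemma~\ref{topologicallemma} can be used to localise $\Delta$ in a prescribed bounded window so that $C_0 \geq C_0'(\Delta)$ holds by construction. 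A subsidiary technical point is the openness of $U$; the strict inequalities in its definition, together with the countability of the atoms of $X_t$, should make this routine.
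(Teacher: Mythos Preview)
Your overall architecture---negate, build a bad-time set, invoke Lemma~\ref{topologicallemma}, discretise, and contradict the random-walk estimate (A.1) of \cite{shi}---matches the paper's exactly. The difference is in how the two technical hurdles you flag are handled, and the paper's choices make both disappear.

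\textbf{The circularity.} The paper does not fix a single pair $(C_0,c)$ and then try to beat the \cite{shi} constants for the resulting scale~$\Delta$. Instead it uses the full diagonal form of the negation: for every $n\in\mathbb{N}$ there exist $h_n\geq n$, $t_n>0$, $r_n$ with $\prob{P}(r_n\leq X_{t_n}\leq r_n+h_n)>n\,h_n/t_n^{1/2}$. After discretising at the scale $h$ supplied by Lemma~\ref{topologicallemma}, the lower bound carries the factor $n_j\to\infty$, while the \cite{shi} upper bound for the fixed walk $S_m=X_{mh}$ is a fixed constant times $h_{n_j}/m_j^{1/2}$. The contradiction is then immediate, with no need to know how the \cite{shi} constants scale with the step variance and no Baire refinement.

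\textbf{Openness of $U$.} Rather than arguing directly that your bad-time set is open (which, as you note, needs some care with atoms and stochastic continuity), the paper \emph{manufactures} an open set by fattening: using \thref{rcont} with $\epsilon=\tfrac12$, $\delta=1$, one finds $a>0$ so that $\prob{P}(\norm{X-X_{t_n}}{[t_n,t_n+a]}>1)<\tfrac12$, whence the Markov property gives
\[
\prob{P}\big(r_n-1\leq X_t\leq r_n+h_n+1\ \ \forall t\in[t_n,t_n+a]\big)\ \geq\ \tfrac12\,\prob{P}(r_n\leq X_{t_n}\leq r_n+h_n)\ >\ \tfrac{n}{2}\,\frac{h_n}{t_n^{1/2}}.
\]
Now $U:=\bigcup_n(t_n,t_n+a)$ is trivially open, and the transferred bound already lives on the whole interval, so any grid point $m_jh\in[t_{n_j},t_{n_j}+a]$ inherits it. Unboundedness follows since the left-hand probability is at most $1$, forcing $t_n\to\infty$.

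In short, your plan is sound but the two fixes you propose are unnecessary: take the diagonal negation and use \thref{rcont} to fatten the bad times into intervals, and both obstacles vanish.
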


\begin{pf} Assume the above statement is not true, i.e. for some such L\'evy process $X$ 

\begin{equation}\forall n \in \mathbb{N} \qquad \exists \hspace{1pt} h_n \geq n \qquad \exists t_n > 0 \quad \exists r_n \in \mathbb{R} \quad\mbox{ such that }\quad
\prob{P}\big(r_n \leq X_{t_n} \leq r_n+h_n \big)\enskip > \enskip n \hspace{2pt} \frac{h_n}{t_n^{1/2}} \hspace{5pt}. \label{contrhyp}
\end{equation}

Now select an $a>0$ corresponding to the choices $\epsilon=\frac{1}{2}$ and $\delta = 1$ in  \thref{rcont}. Evidently, for all $n \in \mathbb{N}$,
\begin{align*}
\prob{P}\big( r_n -1 \leq X_{t} \leq r_n+h_n +1 \enskip \forall t \in [t_n,t_n+a] \big) \enskip
&\geq \enskip \prob{P}\big( r_n  \leq X_{t_n} \leq r_n+h_n , \enskip \norm{X_t -X_{t_n}}{{t \in [t_n,t_n+a]}} \hspace{2pt} < \hspace{2pt} 1 \big)\\
&\geq \enskip \frac{1}{2} \hspace{2pt} \prob{P} \big(  r_n \leq X_{t_n} \leq r_n+h_n \big) \enskip
\geq \enskip \frac{n}{2}\hspace{2pt} \frac{h_n}{t_n^{1/2}} \hspace{5pt},
\end{align*}
where in the second inequality we have used the Markov property of the L\'evy process at time $t_n$. Let $U:= \bigcup_{n=1}^\infty (t_n,t_n+a)$, which is an open set. Note that, to prevent the probability in \eqref{contrhyp} exceeding one we must have $t_n \geq n^4$, proving that $U$ is unbounded. 
Lemma~6.1 therefore supplies an $h>0$ and two strictly increasing sequences $(m_j)$ and $(n_j)$ of natural numbers with the property that, for all $j \in \mathbb{N}$ we have $m_{j}h \in [t_{n_j}, t_{n_j}+a]$. Note that ${t_{n_j}}/{m_j} \to h$ as $j \to \infty$. In particular, there exists $K>0$ such that 
\smash{${{K}/{m_j^{1/2}}}< {1}/{t_{n_j}^{1/2}}$} for all $j \in \mathbb{N}$.
Now define a random walk on $\mathbb{R}$ by $S_n := X_{nh}$, and note that this random walk has zero mean and finite variance. We estimate
\begin{align*}
\prob{P}\big(r_{n_j}-1 \leq S_{m_j} \enskip \leq \enskip r_{n_j}+h_{n_j}+1\big)  \enskip \fal{\geq} \enskip
\prob{P}\big( r_{n_j}-1 \leq X_{t} \leq r_{n_j}+h_{n_j}+1 \quad \forall t \in [t_{n_j},t_{n_j}+a] \big) \\
\al{\geq} \frac{K}{2} n_j \frac{h_{n_j}}{m_j^{1/2}}. 
\end{align*}

Taking suprema and assuming  without loss of generality that $h_{n_j}\geq 2$ for all $j \in \mathbb{N}$, we find that, for all  $j \in \mathbb{N} $,
$$ \sup_{r \in \mathbb{R}} \prob{P}(r \leq S_{m_j} \leq r+h_{n_j}+2) \quad \geq \quad
{\frac{K}{4} n_j \frac{h_{n_j}+2}{m_j^{1/2}}}, $$
contradicting (A.1) in \cite{shi}. \qed
\end{pf} 

\vspace{5pt}

\begin{prop} \thlabel{shiprop2}
Let $X$ be a  L\'evy process with zero mean and finite variance. Then, with $\underline{X}_t := \inf_{0 \leq s \leq t}X_{\hspace{1pt} s}$, we have 
$$ \limsup_{t \to \infty} \enskip
{t^{1/2}} \enskip
\sup_{u \geq 0} \enskip \frac{1}{u+1} \enskip
 \text{\textnormal{\prob{P}}} \big( \hspace{1pt}\underline{X}_t \geq -u \big) < \infty \enskip .$$
 \end{prop}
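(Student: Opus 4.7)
The plan is to prove the statement by contradiction and discretisation, in the spirit of the proof of Proposition~6.3 but with a cleaner discretisation step. Suppose the conclusion fails. Then, from the meaning of $\limsup_{t \to \infty}$, we can extract sequences $t_n \uparrow \infty$ and $u_n \geq 0$ such that
\[
t_n^{1/2} \, \frac{\prob{P}\bigl(\underline{X}_{t_n} \geq -u_n\bigr)}{u_n + 1} \longrightarrow \infty \qquad \text{as } n \to \infty.
\]

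The next step is to transfer this to a discrete-time random walk contradicting the analogue of~(A.2) of~\cite{shi}. Define $S_k := X_k$ for $k = 0, 1, 2, \ldots$; since $X$ has zero mean and finite variance, $(S_k)$ is a zero-mean, finite-variance random walk. Put $m_n := \lfloor t_n \rfloor$, so that every grid point $k \in \{0, 1, \ldots, m_n\}$ lies in $[0, t_n]$. On the event $\{\underline{X}_{t_n} \geq -u_n\}$ each $S_k = X_k$ with $k \leq m_n$ therefore exceeds $-u_n$, giving the one-sided inclusion
\[
\bigl\{\underline{X}_{t_n} \geq -u_n\bigr\} \subseteq \Bigl\{ \min_{0 \leq k \leq m_n} S_k \geq -u_n \Bigr\},
\]
and hence $\prob{P}\bigl( \min_{0 \leq k \leq m_n} S_k \geq -u_n \bigr) \geq \prob{P}\bigl( \underline{X}_{t_n} \geq -u_n \bigr)$.

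Since $m_n / t_n \to 1$ as $n \to \infty$, multiplying by $m_n^{1/2}/(u_n+1)$ and combining with the contradiction hypothesis yields
\[
m_n^{1/2} \, \frac{\prob{P}\bigl( \min_{0 \leq k \leq m_n} S_k \geq -u_n \bigr)}{u_n + 1} \longrightarrow \infty,
\]
which contradicts the corresponding zero-mean, finite-variance random walk estimate from the appendix of~\cite{shi}, completing the proof modulo the import of that lemma.

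The main obstacle is essentially nominal here: one must verify that the random walk statement in~\cite{shi} is available in precisely the form needed, namely a uniform $\limsup$ in $m$ over $u \geq 0$ for zero-mean, finite-variance walks. Unlike the proof of Proposition~6.3, however, the discretisation is a clean one-sided inclusion, since the discrete minimum of sampled values automatically dominates the continuous infimum over $[0,t_n]$. In particular, neither the topological Lemma~6.1 nor the right-continuity perturbation of Lemma~6.2 is needed, and the fixed step size $h = 1$ suffices.
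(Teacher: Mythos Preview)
Your proof is correct and takes a cleaner route than the paper's. The paper argues in parallel with its preceding proposition: it uses the right-continuity perturbation lemma to thicken each $t_n$ to an interval $[t_n, t_n+a]$ on which $\underline{X}_t \geq -u_n-1$ holds with at least half the original probability, then invokes the topological lemma on open unbounded subsets of $[0,\infty)$ to find a step size $h>0$ with infinitely many grid points $m_j h$ landing in these intervals, and finally compares with the walk $S_n = X_{nh}$ along that subsequence. Your observation is that for this particular proposition all of that machinery is unnecessary: since the discrete minimum over the integer sample points in $[0,t_n]$ automatically dominates the continuous infimum $\underline{X}_{t_n}$, the inclusion $\{\underline{X}_{t_n} \geq -u_n\} \subseteq \{\min_{k \leq \lfloor t_n \rfloor} S_k \geq -u_n\}$ is immediate, and step size $h=1$ works without any perturbation. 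The paper's approach has the merit of treating both propositions with the same template (in the other one the comparison runs the opposite way and the perturbation is genuinely needed); yours exploits the favourable direction of the inequality here and is more direct. One nominal point: the paper cites the relevant random-walk estimate as (A.3) of \cite{shi} rather than (A.2).
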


\begin{pf}  The statement in the proposition is equivalent to the following statement:
$$ {\exists C>0 \quad \exists T > 0 \quad \mbox{ such that} \quad t \geq T \Rightarrow \enskip \sup_{u \geq 0} \enskip \frac{1}{u+1} \prob{P}\big( \hspace{1pt}\underline{X}_t \geq -u \big) \enskip \leq \enskip \frac{C}{t^{1/2}}} \enskip .  $$

For a contradiction, let us assume the converse of this statement holds. Then
$$ {\forall n \in \mathbb{N} \quad \exists t_n \geq n \quad \exists u_n \geq 0 \quad \mbox{ such that} \quad 
 \frac{1}{u_n+1} \prob{P}\big( \hspace{1pt}\underline{X}_{\hspace{1pt}{t_n}} \geq -u_n \big) \enskip \geq \enskip \frac{n}{t_n^{1/2}}} \enskip . $$

 As in \thref{shiprop1}, select $a>0$ with the following property:  
$$ \frac{1}{u_n+1} \prob{P} \big( \hspace{1pt}\underline{X}_{\hspace{1pt}{t}} \geq -u_n-1 \quad \forall 
t \in [t_n,t_n+a] \big) \enskip \geq \enskip \frac{n}{2 t_n^{1/2}} \enskip . $$

Now choose sequences $(m_j)$ and $(n_j)$, and $K>0$ precisely as in the proof of \thref{shiprop1}. Select furthermore an $M>0$ with the property that $\frac{1}{u} \leq \frac{M}{u+1} \quad \forall u \geq 1$ . Defining the random walk $(S_n)_{n \in \mathbb{N}}$ as in \thref{shiprop1}, we estimate
\begin{align*}
\frac{K}{2} n_j {\frac{1}{m_j^{1/2}}}  \quad
&\leq \quad \frac{1}{u_{n_j}+1} \enskip \prob{P} \big( \hspace{1pt}\underline{X}_{\hspace{1pt}{t}} \geq -u_{n_j}-1 \quad \forall 
t \in [t_{n_j},t_{n_j}+a] \big) \enskip
\leq  \enskip
 \frac{1}{u_{n_j}+1} \enskip \prob{P} \big( \hspace{1pt}\underline{S}_{\hspace{1pt}{m_{j}}}\geq -u_{n_j}-1 \big) \enskip \\
 &\leq \enskip \sup_{u \geq 0} \enskip \frac{1}{u+1} \enskip \prob{P}\big( \hspace{1pt}\underline{S}_{\hspace{1pt}{m_{j}}}\geq -u-1 \big)
= \enskip \sup_{u \geq 1} \enskip \frac{1}{u} \enskip  \prob{P}\big( \hspace{1pt}\underline{S}_{\hspace{1pt}{m_{j}}}\geq -u \big)\\
&\leq \enskip M \enskip \sup_{u \geq 1} \enskip \frac{1}{u+1} \enskip  \prob{P}\big( \hspace{1pt}\underline{S}_{\hspace{1pt}{m_{j}}}\geq -u\big) \enskip
\leq \enskip M \enskip \sup_{u \geq 0} \enskip \frac{1}{u+1} \enskip  \prob{P}\big( \hspace{1pt}\underline{S}_{\hspace{1pt}{m_{j}}}\geq -u \big)\enskip . 
\end{align*}
This contradicts (A.3) in \cite{shi}. \qed
\end{pf}

\vspace{5pt}

With \thref{shiprop1} and \thref{shiprop2} in hand, the proof of the following corollary follows verbatim from the proof of Lemma A.1 of \cite{shi}.

\begin{cor} \thlabel{shicor1}  Let $C_0$ be the constant whose existence is guaranteed by  \thref{shiprop1}. Then there exists $c>0$ such that, for any $f: \mathbb{R}^{+}_{0} \rightarrow \mathbb{R}^{+}_{0}$ bounded away from $0$, and any $g: \mathbb{R}^{+}_{0} \rightarrow \mathbb{R}$ such that $g(t) \geq -f(t) \enskip \forall t \in \mathbb{R}^{+}_{0}$, we have, for all $t \geq 0$,
\[
\text{\textnormal{\prob{P}}} \Big(g(t) \leq X_t \leq g(t)+ C_0, \enskip \underline{X}_t \geq -f(t) \Big) \enskip \leq \enskip  
c \hspace{2pt} \frac{ \big\lbrace \big(f(t)+1\big) \wedge t^{1/2}\big\rbrace \big\lbrace \big(g(t)+f(t)+1\big) \wedge t^{1/2}\big\rbrace}{t^{3/2}},
\]
for all $t \geq 0$ where $x \wedge y := \min\{x,y\}$. In particular, there exists $c' > 0$ such that for all such $f$ and $g$ we have, for all $t \geq 0$,
\[
\qquad \text{\textnormal{\prob{P}}} \Big(X_t \leq g(t), \enskip \underline{X}_t \geq -f(t) \Big) \enskip \leq \enskip  
c' \hspace{2pt} \frac{\big\lbrace \big(f(t)+1 \big) \wedge t^{1/2}\big\rbrace \big\lbrace \big(g(t)+f(t)+1\big)^2 \wedge t \big\rbrace}{t^{3/2}}.
\]
\end{cor}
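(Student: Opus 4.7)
The plan is to mimic Shi's proof of Lemma~A.1 in~\cite{shi}, with Propositions~\thref{shiprop1} and~\thref{shiprop2} playing the roles of the random-walk local-limit and ballot estimates used there. Having already translated those two ingredients into the L\'evy setting, the rest of the argument is essentially mechanical.

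The key step is to split $[0,t]$ at its midpoint and apply the Markov property: I would write
\[
	\prob{P}\big(g(t) \leq X_t \leq g(t) + C_0,\ \underline{X}_t \geq -f(t)\big)
	\eneq
	\prob{E}\big[\mathbf{1}_{\{\underline{X}_{t/2} \geq -f(t)\}}\, F_t(X_{t/2})\big],
\]
where $F_t(y)$ is the probability that an independent copy $X'$ of $X$ satisfies $X'_{t/2} \in [g(t) - y,\, g(t) + C_0 - y]$ and $\inf_{0 \leq s \leq t/2} X'_s \geq -f(t) - y$. The outer probability $\prob{P}(\underline{X}_{t/2} \geq -f(t))$ is controlled directly by Proposition~\thref{shiprop2}, giving a factor of order $((f(t)+1) \wedge t^{1/2})/t^{1/2}$. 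For the inner function $F_t(y)$, I would use the time-reversal identity for L\'evy processes: $(X'_{t/2} - X'_{(t/2 - s)-})_{s \in [0,t/2]}$ has the same law as $(X'_s)_{s \in [0, t/2]}$. Under this reversal, the joint event defining $F_t(y)$ becomes the event that a reversed centred L\'evy process has terminal value in a window of width $C_0$ and supremum bounded by $g(t)+f(t)+C_0$. Combining Proposition~\thref{shiprop2} (applied to the sign-flipped process, whose law is again zero-mean with finite variance, to handle the supremum constraint) with Proposition~\thref{shiprop1} (for the window of width~$C_0$) yields a bound $F_t(y) \leq c((g(t) + f(t) + 1) \wedge t^{1/2})/t$ uniformly in $y \geq -f(t)$. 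Multiplying the two estimates produces the first inequality.

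The second inequality is a summation argument: I would partition the event $\{X_t \leq g(t),\ \underline{X}_t \geq -f(t)\}$ into the disjoint pieces $\{g(t) - (k+1)C_0 < X_t \leq g(t) - kC_0\}$ for $k = 0, 1, 2, \ldots$. Since $X_t \geq -f(t)$ forces at most $O((g(t)+f(t))/C_0)$ pieces to contribute, applying the first bound to each and summing gives an extra factor of order $(g(t) + f(t) + 1) \wedge t^{1/2}$ in the numerator. Taking the minimum of this summed bound and the direct estimate $\prob{P}(\underline{X}_t \geq -f(t)) \leq c((f(t)+1) \wedge t^{1/2})/t^{1/2}$ from Proposition~\thref{shiprop2} produces the $(g(t) + f(t) + 1)^2 \wedge t$ term in the numerator of the second inequality.

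The main technical obstacle is the time-reversal step: in Shi's discrete setting reversal is elementary, but for continuous-time L\'evy processes one must carefully handle the behaviour of the minimum functional at jump times and invoke the dual-process formalism correctly. For centred L\'evy processes with finite variance, however, these issues are handled by standard fluctuation theory (see e.g.\ Chapter~VI of~\cite{BER96}), and the adaptation proceeds without altering the structure of the bound, which is why the proof indeed ``follows verbatim'' from the random-walk version.
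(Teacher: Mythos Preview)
Your proposal is correct and matches the paper's approach exactly: the paper simply asserts that, with Propositions~\ref{shiprop1} and~\ref{shiprop2} in hand, the proof follows verbatim from the proof of Lemma~A.1 in~\cite{shi}, and your outline is precisely that argument (midpoint split via Markov, time reversal to convert the joint terminal-and-minimum event into one amenable to the two propositions, then summation over $C_0$-windows for the second inequality).
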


\vspace{5pt}

\begin{prop} \thlabel{nonzeroliminf}
Let $X$ be a L\'{e}vy process of the form $(Y_t - ct)_{t \geq 0}$, where $Y$ is a pure-jump subordinator and $c>0$. Assume that $X$ has zero mean and finite variance. For $\alpha>0$ let 
$X^{\alpha}_t := X_t + \alpha$. Then there exists $C>0$ such that, for any 
$f\colon [0,\infty) \rightarrow \mathbb{R}$ satisfying $\limsup_{t \rightarrow \infty} t^{-1/2}f(t)< \infty$ and  $f(t) \geq \alpha$, for all large $t$, we have 
\begin{equation} \label{eq:shia4levy}
\liminf_{t \rightarrow \infty} \hspace{3pt} t^{3/2} \hspace{3pt}\text{\textnormal{\prob{P}}} \Big(\underline{X}^{\alpha}_t \geq 0, \enskip \min_{t \leq s \leq 2t} X^{\alpha}_s \geq f(t), \enskip f(t) \leq X^{\alpha}_{2t} < f(t) + C\Big) \enskip > \enskip 0.
\end{equation} 
\end{prop}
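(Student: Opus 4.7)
The plan is to proceed by discretisation, transferring the random-walk lower bound that is the analog of Shi's (A.4) to the L\'evy process $X$. The key structural input that makes a one-sided transfer possible (and that distinguishes this proof from those of Propositions~\ref{shiprop1} and~\ref{shiprop2}, which used contradiction) is the fact that $X$ has only positive jumps and moves downward only via the linear drift $-ct$. In particular, for any grid spacing $h>0$ and any $n\in\mathbb{N}_0$,
\begin{equation*}
\inf_{s\in[nh,(n+1)h]} X_s \enskip\geq\enskip X_{nh} - ch,
\end{equation*}
since the maximum possible decrease of $X$ over an interval of length $h$ is exactly $ch$. This one-way comparison will let us control the continuous-time infima by controlling the discretised process at the grid points only, which is what the random-walk result hands us.

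First, I would fix $h \in (0,\alpha/c)$, so that $ch<\alpha$, and set $S_n:=X_{nh}$. Then $S$ is a zero-mean, finite-variance random walk. Applying the random-walk analog of the claim (the statement (A.4) in \cite{shi}), there exist $C_0>0$ and $\gamma>0$ such that, for any auxiliary function $\tilde f$ satisfying the hypotheses of that lemma,
\begin{equation*}
\liminf_{N\to\infty}\, N^{3/2}\,\text{\textnormal{\prob{P}}}\Big(\underline{S^{\alpha-ch}}_{N}\geq 0,\,\min_{N\leq n\leq 2N}S^{\alpha-ch}_{n}\geq \tilde f(N),\, \tilde f(N)\leq S^{\alpha-ch}_{2N}<\tilde f(N)+C_0\Big)\,\geq\,\gamma.
\end{equation*}
Apply this with $\tilde f(N):=f(Nh)$, which inherits the required growth from $f$ once $N$ is large.

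Next, set $C:=C_0+2ch$ and verify the containment of events. If at every grid point we have $S^\alpha_n\geq ch$ for $0\leq n\leq N$ and $S^\alpha_n\geq f(Nh)+ch$ for $N\leq n\leq 2N$, and moreover $S^\alpha_{2N}\in[f(Nh),f(Nh)+C_0)$, then the structural inequality from the first paragraph gives $X^\alpha_s\geq 0$ on $[0,Nh]$ and $X^\alpha_s\geq f(Nh)$ on $[Nh,2Nh]$, while $X^\alpha_{2Nh}=S^\alpha_{2N}\in[f(Nh),f(Nh)+C)$. Rewriting the grid-level event in terms of the shifted walk $S^{\alpha-ch}$ places us exactly in the framework of the displayed lower bound, so for $t=Nh$ we obtain
\begin{equation*}
t^{3/2}\,\text{\textnormal{\prob{P}}}\Big(\underline{X^{\alpha}}_{t}\geq 0,\,\min_{t\leq s\leq 2t}X^{\alpha}_{s}\geq f(t),\,f(t)\leq X^{\alpha}_{2t}<f(t)+C\Big)\,\geq\,\gamma h^{3/2}.
\end{equation*}

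The last step is to pass from $t\in h\mathbb{N}$ to general $t\uparrow\infty$. For arbitrary $t$, pick $N=N(t)$ with $t\in[Nh,(N+1)h)$; it is enough to compare the event at continuous time $t$ with the grid-based event above at time $Nh$. Provided $h$ is fixed, the two target functions $f(t)$ and $f(Nh)$ differ by $O(1)$ (since $\limsup t^{-1/2}f(t)<\infty$ controls the oscillation over a bounded time shift), and the additional behaviour of $X^\alpha$ on the small intervals $[Nh,t]$ and $[2Nh,2t]$ can be absorbed by slightly inflating $C$ and restricting the window at $2Nh$, at the cost of a further multiplicative constant. The main obstacle is carrying out this interpolation cleanly: one must be careful that the window inflation does not break the lower bound, and one should refine the grid to $h/k$ for some $k$ if necessary to make the target windows fit. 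Once this is handled, $\liminf_{t\to\infty}t^{3/2}\text{\textnormal{\prob{P}}}(\cdots)\geq \gamma'>0$ follows uniformly in $f$ within the stated class.
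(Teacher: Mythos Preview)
Your structural observation---that $X$ only decreases through the linear drift, so $\inf_{s\in[nh,(n+1)h]}X_s\geq X_{nh}-ch$---is exactly right and is the same leverage the paper exploits. The discretisation and the event containment for $t=Nh$ on the grid are also essentially correct.

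The gap is in your interpolation from grid times to general $t$. You assert that $f(t)$ and $f(Nh)$ differ by $O(1)$ because $\limsup t^{-1/2}f(t)<\infty$, but this hypothesis gives \emph{no} control on the oscillation of $f$ over bounded time intervals: it only says $f(t)\leq K\sqrt{t}$. A function such as $f(t)=\alpha+(1+\sin(e^t))\sqrt{t}$ satisfies all the hypotheses yet $|f(t)-f(Nh)|$ can be of order $\sqrt{t}$. Consequently the continuous-time event at $t$ and the grid event at $Nh$ cannot be compared by ``inflating $C$ slightly'': the target windows $[f(t),f(t)+C)$ and $[f(Nh),f(Nh)+C)$ may be disjoint. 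The same issue affects the barrier $\min_{t\leq s\leq 2t}X^\alpha_s\geq f(t)$ versus $\min_{Nh\leq s\leq 2Nh}X^\alpha_s\geq f(Nh)$, and in addition the intervals $[t,2t]$ and $[Nh,2Nh]$ do not nest.

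The paper sidesteps exactly this difficulty by arguing by contradiction rather than directly. Assuming no $C$ works, one obtains a bad sequence $t_k\to\infty$ along which the probability is $o(t_k^{-3/2})$. One then sets $n_k:=\lfloor (t_k-1)/h\rfloor$ and defines the discrete target \emph{in terms of $f(t_k)$}, namely $a_{n_k}:=f(t_k)+\alpha$. With this choice there is no mismatch of $f$-values at all: the random-walk event at $n_k$ is built to sit inside the continuous-time event at $t_k$ (using your structural inequality together with a short-time control from \thref{rcont} for the endpoint), and one contradicts Lemma~A.3 of \cite{shi}. In short, the contradiction route lets you fix $t_k$ first and tailor the discrete problem to it, which is precisely what your direct approach cannot do for arbitrary $f$ in the stated class.
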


\begin{pf} 
Let us assume that there exists no such constant $C>0$, and fix an $\alpha>0$. Select an $a>0$ corresponding to the choices $\epsilon = \frac{1}{2}$ and $\delta = 1$ in Lemma \ref{rcont}. Finally, choose an $h \in (0, \frac{1}{4} \min \{a, \frac{\alpha}{c}\})$. Define a random walk $(S_n)$ by $S_n := X_{nh}$ and note that $(S_n)$ satisfies the hypotheses of Lemma~A.3 in \cite{shi}. Let $K$ denote the positive constant corresponding to $(S_n)$ whose existence is guaranteed by Lemma A.3 in \cite{shi} (there, $K$ is called $2C$), and pick $\tilde{C} > K + 1 + \alpha$.  Since, in particular, we are assuming that \eqref{eq:shia4levy} does not hold for $C = \tilde{C}$, we infer the existence of a sequence $(t_k) \subseteq [0,\infty)$ such that $\lim_{k \rightarrow \infty} t_k = \infty$ with the property 
\begin{equation} 
\forall k \in \mathbb{N} \quad   \quad
 \left({\frac{t_k}{h}}\right)^{3/2} \prob{P} \Big(\underline{X}^{\alpha}_{t_k} \geq 0, \enskip \inf_{{t_k} \leq s \leq 2{t_k}} X^{\alpha}_s \geq f(t_k), \enskip f(t_k) \leq X^{\alpha}_{2t_k} < f(t_k) + \tilde{C} \Big) \enskip < \enskip \frac{1}{k} \enskip. \label{equ1}
 \end{equation}
Now define $n_k:= \lfloor{\frac{t_k - 1}{h}} \rfloor$. Note in particular that $(n_k+1) h \in [t_k - h, t_k]$; this will allow us to ensure that $X^{\alpha}_{t_k} \geq f(t_k)$ in the following computation. Define $a_{n_k} := f(t_k)+\alpha$ for each $k \in \mathbb{N}$, and $a_n := 0$ whenever there is no $k$ such that $n = n_k$. The important thing to note is that for any $j,k \in \mathbb{N}$ with $j \leq k$ and all $r \geq 0$ we have

$$ \min_{s \in [jh, jh +r]} X^{\alpha}_ s   \enskip \geq \enskip  S_j -rc + \alpha \enskip  \geq \enskip \underline{S}_k - rc + \alpha \enskip \geq \enskip \underline{S}_k \qquad \textrm{whenever} \qquad r \leq  \frac{\alpha}{c}. $$  

Consequently, whenever $ r \leq \frac{\alpha}{c}$ we find, for any $k \in \mathbb{N}$, that  
$\underline{X}^{\alpha}_{kh+r} \geq \underline{S}_k$
Recalling that $n_k h \in [t_k - 2h, t_k - h]$, we can write $t_k = n_kh + r $ for some $r_h \in [h,2h]$. Consequently,  we deduce that

$$ \underline{X}^{\alpha}_{t_k}\enskip  = \enskip \underline{X}^{\alpha}_{n_k h+r} \enskip \geq \enskip \underline{S}_{n_k} \qquad \textrm{provided}\qquad r_h \leq \frac{\alpha}{c},  $$

and the condition on $r_h$ holds because we have selected $h < \frac{\alpha}{2c}$. We will use this in the computation below, where we require $\{ \underline{S}_{n_k} \geq 0 \} \subseteq \{ \underline{X}^{\alpha}_{t_k} \geq 0 \}$. By the same considerations, we have also have the inclusion 
$$  \bigg\{ \inf\limits_{{t_k} \leq s \leq 2{t_k}} X^{\alpha}_s \geq f(t_k)  \bigg\} \enskip
\subseteq \enskip \bigg\{ \min_{n_k <j \leq 2n_k} S_j \geq  \enskip f(t_k)+ \alpha \bigg\} ,$$
since we have in fact picked $h < \frac{\alpha}{4c}$. We can therefore estimate 
\begin{align} \label{inequ2} \left({\sfrac{t_k}{h}}\right)^{3/2} \, &\prob{P} \Big(\underline{X}^{\alpha}_{t_k} \geq 0, \enskip \inf\limits_{{t_k} \leq s \leq 2{t_k}} X^{\alpha}_s \geq f(t_k), \enskip f(t_k) \leq X^{\alpha}_{2t_k} < f(t_k) + \tilde{C}\Big) \notag\\
&\geq 
\enskip n_k^{3/2} \, \prob{P} \, \Big(\underline{S}_{n_k} \geq 0, \enskip \min_{n_k <j \leq 2n_k} S_j \geq  \enskip f(t_k)+ \alpha, \enskip f(t_k) + \alpha \leq S_{2n_k} < f(t_k)+ \tilde{C} -1, \nonumber \\ 
& \hspace{150pt} \norm{X_t -X_{2n_k h}}{{t \in [2n_k h ,2t_k]}} < 1\Big) \nonumber \\ 
&\geq \enskip \sfrac12\, {n_k^{3/2}}\, \prob{P} \, \Big(\underline{S}_{n_k} \geq 0, \enskip \min_{n_k <j \leq 2n_k} S_j \geq  a_{n_k}, \enskip a_{n_k} \leq S_{2n_k} < a_{n_k} + K\Big).  
\end{align}

In the second inequality we used the fact that $h < \frac{a}{4}$ and the Markov property of $X^{\alpha}$ at time $2n_k h$. Combining \eqref{equ1} and \eqref{inequ2}, we find that, for all $k\in\mathbb{N}$, we have 
$$n_k^{3/2} \, \prob{P} \Big(\underline{S}_{n_k} \geq 0, \enskip \min_{n_k <j \leq 2n_k} S_j \geq  a_{n_k}, \enskip a_{n_k} \leq S_{2n_k} < a_{n_k} + K\Big) \enskip \leq \enskip \frac{2}{k}, $$
contradicting Lemma A.3 in \cite{shi}. \qed
\end{pf}

We finish this appendix with an arithmetic fact required in Section~4.

\begin{lem} \thlabel{sum}
For any $\alpha, \gamma>0$ and $k \in \mathbb{N}$ we have
$$\sum_{n = 4}^{\infty} n \left(   1 - \frac{1}{(\log n)^k}  \right)^{n^\alpha } \enskip < \enskip \infty.$$
\end{lem}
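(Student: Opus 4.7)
The plan is to use the elementary bound $1 - x \leq e^{-x}$ valid for $x \in [0,1]$, applied with $x = (\log n)^{-k}$ (which lies in $[0,1]$ for $n \geq 3$). This yields, for all $n \geq 4$,
\[
\left( 1 - \frac{1}{(\log n)^k} \right)^{n^\alpha} \leq \exp\left( -\frac{n^\alpha}{(\log n)^k} \right),
\]
so it suffices to show that $\sum_{n \geq 4} n \exp\big(-n^\alpha/(\log n)^k\big) < \infty$.

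The key observation is that for any fixed $\alpha > 0$ and $k \in \mathbb{N}$, the quantity $n^\alpha/(\log n)^k$ grows strictly faster than $\log n$ as $n \to \infty$; indeed
\[
\frac{n^\alpha/(\log n)^k}{\log n} = \frac{n^\alpha}{(\log n)^{k+1}} \longrightarrow \infty.
\]
Therefore there exists $N = N(\alpha, k)$ such that $n \geq N$ implies $n^\alpha/(\log n)^k \geq 3 \log n$, and hence
\[
n \, \exp\left( -\frac{n^\alpha}{(\log n)^k} \right) \leq n \cdot n^{-3} = n^{-2}
\]
for all $n \geq N$. Summability then follows from convergence of $\sum n^{-2}$, together with the fact that the finite sum over $4 \leq n < N$ is trivially finite.

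There is no real obstacle here; the whole argument is routine real analysis. The only mildly delicate point is making sure the constant $3$ in the bound $n^\alpha/(\log n)^k \geq 3 \log n$ is chosen large enough (any constant strictly greater than $2$ works) to beat the linear factor $n$ in the summand.
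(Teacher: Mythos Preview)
Your proof is correct and follows essentially the same route as the paper's: both reduce to showing that $n^\alpha/(\log n)^k$ eventually dominates $3\log n$, using the elementary inequality $1-x\le e^{-x}$ (the paper phrases this equivalently as $\log t-\log(t-1)\ge 1/t$ after a change of variable to an integral). Your version is in fact slightly more streamlined, since you work directly with the sum and avoid the substitution $x\mapsto e^x$.
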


\begin{pf} It suffices to show that 
\[ 
\int_4^{\infty} x \left(   1 - (\log x)^{-k} \, \right)^{x^\alpha} dx 
\eneq    
\int_{\log 4}^{\infty} e^{2x} \left(   1 - x^{-k} \,  \right)^{e^{\alpha x}} dx      \enskip  < \enskip \infty . 
\]
To prove this integrability, we show that the second integrand is $o(e^{-x})$ as $x \rightarrow \infty$, or, equivalently, that $e^{\alpha x} \left( \log(x^k) - \log(x^k-1)  \right) - 3x   \enskip  \rightarrow \enskip  \infty$ 
as $x \rightarrow \infty$. For all $t > 1$  we have $\log' (s) \geq \frac{1}{t} \enskip \forall s \in [t-1,t]$, so $\log(x^k) - \log(x^k-1) \geq \frac{1}{x^k}$ for all $x > 1$. It remains to note that $x^{-k} e^{\alpha x} - 3 x \rightarrow \infty$ as $x \rightarrow \infty$.  \qed
\end{pf}


\bigskip

{\bf Acknowledgement:}  We thank Yan Fyodorov who suggested this project to us. We would like to thank three anonymous referees for their careful reading of  an earlier version of this paper. F.L. was supported by an EPSRC studentship for the duration of this project.

\vspace{20pt}

\newcommand{\bergaaa}[1]{}


\end{document}